\documentclass[12pt,a4paper]{amsart} 
\usepackage{amsmath,amsfonts,amssymb,amsthm,amscd,slashed}
\usepackage{epsfig}
\pdfoutput=1

\usepackage{tikz}

\usepackage{xypic}

\usepackage{hyperref}
\usepackage{JK}                

\newtheorem{thm}{Theorem}

\newtheorem{lma}[thm]{Lemma} \newtheorem{prop}[thm]{Proposition}
\newtheorem{defn}[thm]{Definition}

\newtheorem{rem}[thm]{Remark}

\newtheorem{ass}{Assumption}

\setcounter{tocdepth}{2}
\def\1{\mathbb{I}}

\def\C{\mathbb{C}}
\DeclareMathOperator{\Cl}{Cl}


\DeclareMathOperator{\dom}{Dom}

\DeclareMathOperator{\End}{End}

\def\Hom{\mathrm{Hom}}

\def\S{\mathbb{S}}

\def\T{\mathbb{T}}

\def\tilde{\widetilde}

\def\U{\mathcal{U}}

\def\cX{\mathcal{X}}

\title[Factorization of Dirac operators on toric NC manifolds]{Factorization of Dirac operators on toric noncommutative manifolds}
\author{Jens Kaad and Walter D. van Suijlekom}
\address{Department of Mathematics and Computer Science, Syddansk Universitet, Campusvej 55, 5230, Odense M, Denmark}
\email{jenskaad@hotmail.com}
\address{Institute for Mathematics, Astrophysics and Particle Physics, Radboud University Nijmegen, Heyendaalseweg 135, 6525 AJ Nijmegen, The Netherlands}
\email{waltervs@math.ru.nl}

\date{\today}

\dedicatory{Dedicated to Alain Connes on the occasion of his 70th birthday}

\subjclass[2010]{58B34;19K35,53C27} 
\keywords{Toric noncommutative manifolds, Connes-Landi sphere, Unbounded Kasparov modules, Half-closed chains, Dirac operators, Spin$^c$-manifolds, Unbounded KK-theory, Unbounded Kasparov product}

\begin{document}
\bibliographystyle{amsalpha-lmp}

\begin{abstract}
We factorize the Dirac operator on the Connes--Landi 4-sphere in unbounded KK-theory. We show that a family of Dirac operators along the orbits of the torus action defines an unbounded Kasparov module, while the Dirac operator on the principal orbit space ---an open quadrant in the 2-sphere--- defines a half-closed chain. We show that the tensor sum of these two operators coincides up to unitary equivalence with the Dirac operator on the Connes--Landi sphere and prove that this tensor sum is an unbounded representative of the internal Kasparov product in bivariant K-theory.

We also generalize our results to Dirac operators on all toric noncommutative manifolds subject to a condition on the principal stratum. We find that there is a curvature term that arises as an obstruction for having a tensor sum decomposition in unbounded KK-theory. This curvature term can however not be detected at the level of bounded KK-theory.
\end{abstract}

\maketitle

\section{Introduction}
At the beginning of this millennium a new class of noncommutative spin manifolds was discovered by Connes and Landi \cite{CL01}. They are based at the topological level on Rieffel's deformation along a torus action \cite{Rie93a}. The propotypical example is given by the $\theta$-deformed four-sphere $\S^4_\theta$ ---also referred to nowadays as the Connes--Landi sphere--- and, in fact, in \cite{CL01} an explicit formula was given for the Dirac operator on this noncommutative space. This was elaborated on in \cite{CD02}. In the work of the second author as a PhD student of  Landi a key role was played by this explicit example of a noncommutative spin manifold in the analysis of noncommutative instanton moduli spaces ({\it cf.} \cite{LS04,LS06}). 

It is the purpose of the present paper to dissect that very Dirac operator on the Connes--Landi sphere by writing it as a tensor sum of a vertical family of Dirac operators and a horizontal Dirac operator. The vertical family of Dirac operators differentiates along the orbits of the torus action on $\S^4$ defining the deformation, the horizontal data corresponds to the torus invariant part. More precisely, we will write the Dirac operator $D_{\S^4_\theta}$ (up to unitary equivalence and on a core) as
\begin{equation}
\label{eq:tensor-sum-S4}
D_{\S^4_\theta}= D_V \otimes 1 + 1 \otimes_\nabla D_{Q_0^2}
\end{equation}
where $D_V$ is a suitable family of Dirac operators and $D_{Q_0^2}$ is the (closed symmetric) Dirac operator on the principal orbit space denoted $Q_0^2$. We show that our factorization result fits well into the framework of noncommutative geometry \cite{C94} by proving that the tensor sum represents the interior Kasparov product of the corresponding classes in bivariant K-theory.

In fact, we will show that such a factorization result holds for Dirac operators on all toric noncommutative manifolds $M_\theta$ that were described by Connes and Landi \cite{CL01}, subject to a condition on the principal stratum. Thus, we identify a vertical family of Dirac operators $D_V$ that allows us to write the Dirac operator $D_{M_\theta}$ as a tensor sum (analogous to Equation \eqref{eq:tensor-sum-S4}) with the Dirac operator on the principal orbit space. We then show that this tensor sum is an unbounded representative of the interior Kasparov product of the corresponding classes in bivariant K-theory. An obstruction that appears in this tensor sum decomposition and which is only visible at the level of unbounded KK-theory is given by the curvature of the fibration.

The present work can be considered as part of the search for a suitable geometric description of the internal gauge degrees of freedom that arise from a noncommutative structure. Indeed, any noncommutative unital $*$-algebra $A$ gives rise to a non-abelian group $\U(A)$ of invertible (unitary) elements in $A$. This has given rise to many applications in physics, such as to Yang--Mills theories \cite{CC96,CC97,BS10} and to the Standard Model of elementary particles 
\cite{CCM07,CM07,Cac11,BD13,CCS13b}. In all of these examples the elements in $\U(A)$ are realized as automorphisms of a principal bundle, in perfect agreement with the usual description of gauge theories. It is important to remark, however, that in the noncommutative approach the gauge group and the gauge fields are defined along very general lines \cite{C96}, valid for any spectral triple on a $C^*$-algebra $A$, but that in this generality the geometric picture is less clear. Nevertheless, the work on instanton moduli spaces on toric noncommutative manifolds  \cite{BL09a,BS10,BLS11} demonstrate that in noncommutative geometry internal gauge parameters beg for a description on the same geometric footing as the usual gauge degrees of freedom. 

These results motivated previous work on factorizations of the Dirac operator on the $\theta$-deformed Hopf fibration \cite{BMS13}, and, more generally, on toric noncommutative manifolds \cite{FR15b,CM16}. However, in these examples it was of crucial importance that the base manifold appears as the orbit space of a {\em free} torus action. This was in contrast with the topological bundle picture we derived for the internal gauge degrees of freedom, valid for any real spectral triple \cite{Sui14b}. As a special class of examples, it was shown that the deformation of a Riemannian spin manifold along a torus action can be described at the $C^*$-algebraic ({\it i.e.} topological) level by a $C^*$-bundle on the (possibly singular) orbit space. This applies in particular to the Connes--Landi sphere: one can construct a $C^*$-bundle on the singular orbit space $\S^4/\T^2$ for the torus action on the 4-sphere whose space of continuous sections is isomorphic to the $C^*$-algebra describing the noncommutative 4-sphere.

We now take the second step and push this bundle description for the Connes--Landi sphere ---in fact, for toric noncommutative manifolds in general--- to the geometric level by showing that also the Dirac operator can be decomposed into a vertical operator acting on continuous sections of a Hilbert bundle, and a horizontal Dirac operator on the (principal) orbit space (Theorems \ref{thm:tensor-sum-toric} and \ref{thm:fact-toric} below). 
This is in line with the recent paper \cite{KS17a} in which we deal with factorizations of Dirac operators on almost-regular fibrations. 


\subsection*{Acknowledgements}
%

We gratefully acknowledge the Syddansk Universitet Odense and the Radboud University Nijmegen for their financial support in facilitating this collaboration. 

During the initial stages of this research project the first author was supported by the Radboud excellence fellowship.

The first author was partially supported by the DFF-Research Project 2 ``Automorphisms and Invariants of Operator Algebras'', no. 7014-00145B and by the Villum Foundation (grant 7423).

The second author was partially supported by NWO under VIDI-grant 016.133.326.

\section{Dirac operator on the Connes-Landi sphere}
The Connes-Landi sphere arises as a deformation of the spin geometry of the round four-sphere $\S^4$. It is based on the introduction of the noncommutative torus along the action on $\S^4$ of a 2-torus. Before we give a precise definition of the spin geometry of the noncommutative sphere, let us start by collecting some basic facts about the classical geometry of the round 4-sphere.

\subsection{The round 4-sphere}
We parametrize the round 4-sphere $\S^4$ by toroidal coordinates: $0\leq \theta_1,\theta_2<2 \pi$, $0 \leq  \varphi\leq \pi/2$ and $-\pi/2 \leq \psi \leq \pi/2$:
\begin{equation}
\label{eq:spherical-S4}
a= e^{i \theta_1} \cos \varphi \cos \psi; \qquad b =e^{i \theta_2} \sin \varphi \cos \psi; \qquad x = \sin \psi
\end{equation}
so that indeed $|a|^2 +|b|^2+x^2 =1$. In these coordinates, the metric is given by 
\begin{align}
\label{eq:metric-S4}
g_{\S^4} &= \cos^2 \varphi \cos^2 \psi ~ d \theta_1^2 + \sin^2 \varphi \cos^2 \psi ~ d \theta_2^2+\cos^2 \psi ~ d \varphi^2 + d\psi^2.
\end{align}
There is an action of $\T^2$ given by translation in the coordinates $\theta_1$ and $\theta_2$. Even though this action is not free there is principal stratum, which is a dense open subset of $\S^4$ and we denote it by $\S^4_0$. Note that
\[
\S^4_0 = \{ a \neq 0, b \neq 0\} 
= \S^4 \setminus\left( (\S^4)^{ \{ 0 \} \times \T} \cup (\S^4)^{\T \ti \{0\}} \right)
\]
where the subsets 
\[
\begin{split}
& (\S^4)^{\T \ti \{0\}} = \{ (a,b,x) \in \S^4 \mid a = 0\} \q \Tex{and} \\
& (\S^4)^{ \{ 0 \} \times \T} =  \{(a,b,x) \in \S^4 \mid b = 0\}
\end{split}
\]
are embedded 2-spheres, and hence both compact embedded submanifolds of $\S^4$ of codimension two. This will turn out to be useful for the analysis later on.

%
%

The quotient space $\S^4/\T^2$ is denoted by $Q^2$; we write $\pi: \S^4 \to Q^2$ for the projection map. More precisely, 
\[
Q^2 = \left\{ (\cos \varphi \cos \psi, \sin \varphi \cos \psi, \sin \psi)\mid 0 \leq \varphi\leq \pi/2, -\pi/2 \leq \psi \leq \pi/2 \right\}
\]
which is a closed quadrant of the two-sphere $\S^2$.  
We also write 
\[
Q^2_0= \left\{ (\cos \varphi \cos \psi, \sin \varphi \cos \psi, \sin \psi)\mid 0 < \varphi < \pi/2, -\pi/2 < \psi < \pi/2 \right\}
\]
for the interior of $Q^2$, an open quadrant in $\S^2$. Note that $Q_0^2$ is isomorphic to an open square on which the vector fields $\pa/\pa \varphi$ and $\pa/\pa \psi$ make sense globally. The induced map 
\[
\pi_0 = \pi|_{\S^4_0}: \S^4_0 \to Q^2_0
\] 
is then a trivial $\T^2$-principal fiber bundle, hence a submersion. This becomes a Riemannian submersion if we equip $\S^4_0$ with the induced metric from $\S^4$ (cf. Equation \eqref{eq:metric-S4}) and $Q^2_0$ with the induced metric from $\S^2$, i.e.,
\begin{equation}
\label{eq:metric-S2}
g_{Q_0^2} = \cos^2 \psi ~ d \varphi^2 + d\psi^2.
\end{equation}

\begin{figure}
\includegraphics[scale=.5]{./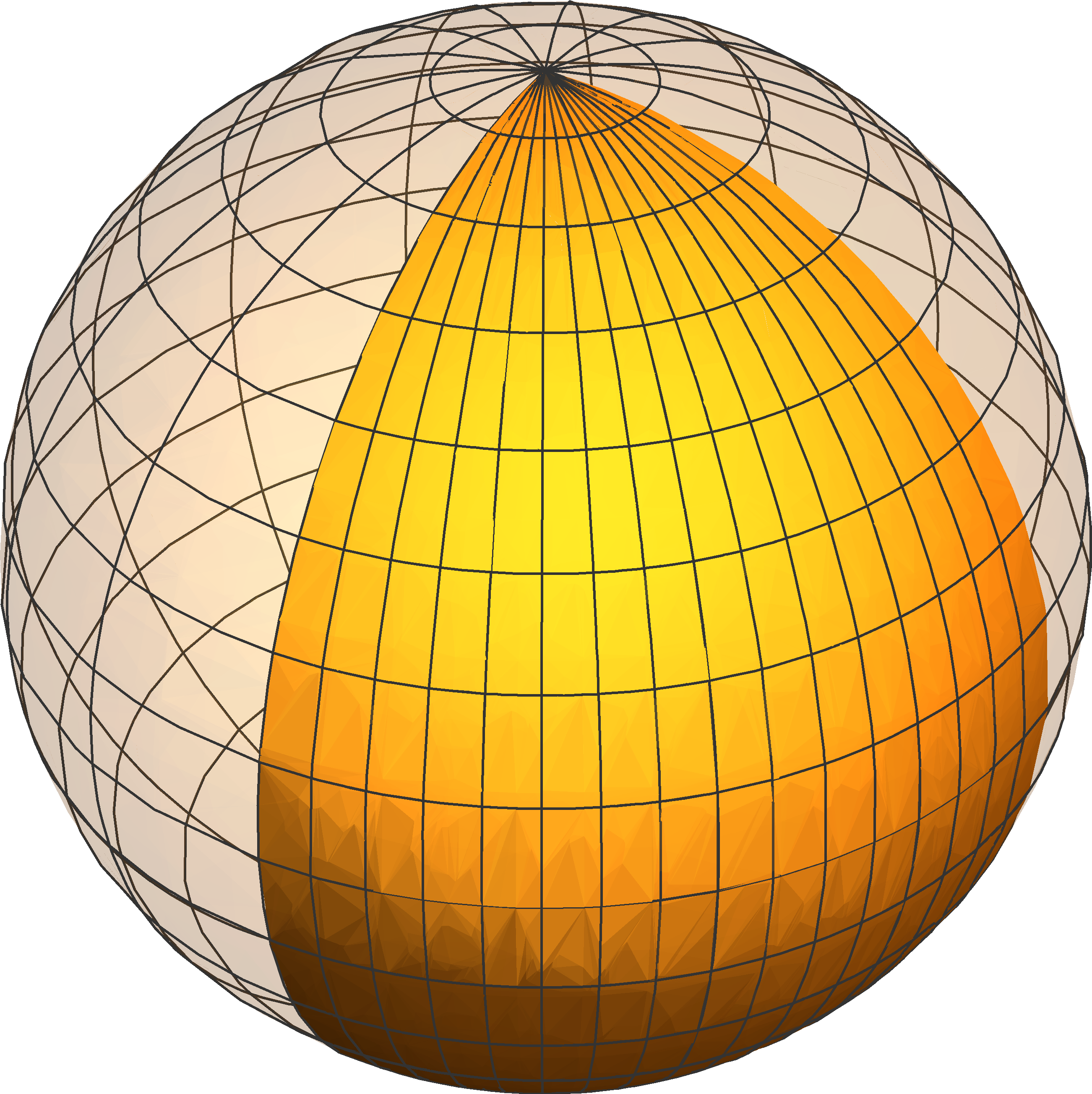}
\caption{The base space of the Riemannian submersion $\pi_0 : \S^4_0 \to Q_0^2$ is an open quadrant in the 2-sphere; the fibers are given by 2-tori. }
\label{fig:quadrant}
\end{figure}

\begin{rem}
The above context of a Riemannian submersion $\pi_0 : \S^4_0 \to Q_0^2$ allows us to speak of vertical and horizontal vector fields on $\S^4_0$, spanned by the respective pairs $\pa/\pa\theta_1,\pa/\pa\theta_2$ and $\pa/\pa\varphi, \pa/\pa \psi$. Accordingly, we may compute the second fundamental form $S$, the curvature $\Omega$ and the mean curvature $k$ of the Riemannian submersion $\pi_0: \S^4_0 \to Q^2_0$ (cf. \cite[Definition 3]{KS16} and \cite[Section 10.1]{BGV92}). The non-vanishing components of the second fundamental form are 
\begin{gather*}
S\left(\frac{\pa}{\pa\theta_1} , \frac{\pa}{\pa \theta_1},\frac{\pa}{\pa \varphi}\right) = - \sin \varphi \cos \varphi \cos^2 \psi  
\\ S\left(\frac{\pa}{\pa\theta_2} , \frac{\pa}{\pa \theta_2},\frac{\pa}{\pa \varphi}\right) =\sin \varphi \cos \varphi \cos^2 \psi
\\
S\left(\frac{\pa}{\pa\theta_1} , \frac{\pa}{\pa \theta_1},\frac{\pa}{\pa \psi}\right) = - \cos^2 \varphi \sin \psi \cos \psi 
\\
S\left(\frac{\pa}{\pa\theta_2} , \frac{\pa}{\pa \theta_2},\frac{\pa}{\pa \psi}\right) = - \sin^2 \varphi \sin \psi \cos \psi.
\end{gather*}
so that the mean curvature becomes 
\begin{equation}
\label{eq:mean-curv-S4} 
k\left(\frac{\pa}{\pa \varphi}\right) = \cot \varphi - \tan \varphi;\qquad
k\left(\frac{\pa}{\pa \psi}\right) = -2 \tan \psi.
\end{equation}
The curvature $\Omega$ is trivial because the horizontal vector fields close under the Lie bracket. 
\end{rem}

\subsection{The noncommutative $n$-torus}\label{ss:nc-torus}
We recall some basic concepts on the noncommutative $n$-torus \cite{C80,Rie81}. Thus, let $\te \in M_n(\rr)$ be a fixed skew-symmetric matrix for some $n \in \{2,3,\ldots \}$. The noncommutative $n$-torus is built on the universal unital $*$-algebra $\C[\T_\te^n]$ generated by $n$ unitaries $U_1,U_2,\ldots,U_n$ that satisfy the relations
\begin{equation}
\label{eq:nc-torus}
U_m U_l = e^{2 \pi i \cd \theta_{ml}} \cd U_l U_m \q l,m \in \{1,\ldots,n\}.
\end{equation}
The unital $*$-algebra $\C[\T_\te^n]$ carries a faithful tracial linear map $\tau : \C[\T_\te^n] \to \C$ defined by
\[
\tau : \sum_{k \in \zz^n} \la_k U^k \mapsto \la_{(0,0,\ldots,0)},
\]
where $U^k = U^{k_1}_1 \clc U^{k_n}_n$ for all $k \in \zz^n$. We thus have the Hilbert space $L^2(\T_\te^n)$ defined as the completion of $\C[\T_\te^n]$ with respect to the (pre)-inner product $\inn{\cd,\cd} : \C[\T_\te^n] \ti \C[\T_\te^n] \to \C$ defined by
\[
\inn{x,y} = \tau(x^* y) \q x,y \in \C[\T_\te^n].
\]
The multiplication operation in the unital $*$-algebra $\C[\T_\te^n]$ then induces a faithful action of $\C[\T_\te^n]$ on $L^2(\T_\te^n)$ and the topology of the noncommutative $n$-torus is described by the unital $C^*$-algebra $C(\T_\te^n)$ generated by the unitaries $U_1, \ldots, U_n$ considered as bounded operators on $L^2(\T_\te^n)$. 
%
The $C^*$-algebra $C(\T_\te^n)$ carries a strongly continuous action of the $n$-torus
\[
\si : \T^n \ti C(\T_\te^n) \to C(\T_\te^n)
\]
defined on generators by $\sigma_{t}(U_l) = t_l \cd U_l$. We will also denote the restriction of $\sigma$ to the coordinate algebra $\C[\T_\te^n]$ by the same symbol $\sigma$. The action $\sigma$ also extends to a strongly continuous unitary representation of $\T^n$ on $L^2(\T_\te^n)$, still denoted by $\sigma$.  

The faithful tracial linear map $\tau : \C[\T_\te^n] \to \C$ extends to a $\T^n$-invariant faithful tracial state on the unital $C^*$-algebra $C(\T_\te^n)$.

For each $j \in \{1,2,\ldots,n\}$, we have a $*$-derivation
\[
\de_j : \C[\T_\te^n] \to \C[\T_\te^n] \q \de_j : \sum_{k \in \zz^n} \la_k U^k \mapsto \sum_{k \in \zz^n} k_j \cd \la_k U^k,
\]
and together with the $C^*$-norm $\| \cd \|$ on $C(\T_\te^n)$ this gives rise to the fundamental system of seminorms $\{ \| \cd \|_m \}_{m \in \nn_0^n}$ on $\C[\T_\te^n]$ defined by
\[
\| \cd \|_m : x \mapsto \sum_{i_1 = 0}^{m_1} \sum_{i_2 = 0}^{m_2} \ldots \sum_{i_n = 0}^{m_n} \| \de_1^{i_1} \de_2^{i_2} \clc \de_n^{i_n}(x) \|.
\]
The unital Fr\'echet $*$-algebra $C^\infty(\T_\te^n)$ of smooth functions on the noncommutative $n$-torus is then defined as the completion of the unital $*$-algebra $\C[\T_\te^n]$ with respect to the fundamental system of seminorms $\{ \| \cd \|_m \}_{m \in \nn_0^n}$.
%

\subsection{The Connes-Landi sphere}
The noncommutative 4-sphere introduced by Connes and Landi in \cite{CL01} arises essentially by inserting the noncommuting unitaries $U_1$ and $U_2$ in the place of the toric coordinate functions $e^{i \theta_1}$ and $e^{i \theta_2}$, respectively. Let us give a precise definition, following basically \cite{CD02}. We start with the pullback $\alpha$ of the torus action on $\S^4$ to continuous functions $C(\S^4)$: 
\[
\alpha_{t}(f)(p) = f( t^{-1} \cdot p) \q  t \in \T^2, p \in \S^4.
\]
It is clear from this definition that $\alpha: \T^2 \ti C(\S^4) \to C(\S^4)$ defines a strongly continuous action of the $2$-torus. Moreover, this action restricts to the smooth functions $C^\infty(\S^4)$ and we will also denote the corresponding action by $\alpha: \T^2 \ti C^\infty(\S^4) \to C^\infty(\S^4)$.

For each $k \in \zz^2$ we let $C^\infty(\S^4)_k \subset C^\infty(\S^4)$ denote the spectral subspace
\[
C^\infty(\S^4)_k = \{ f \in C^\infty(\S^4) \mid \al_t(f) = t^k \cd f \Tex{ for all } t \in \T^2 \}.
\]
Similarly, using that our torus action restricts to the principal stratum, we have the spectral subspaces $C^\infty_c(\S^4_0)_k \subset C^\infty_c(\S^4_0)$. Let us fix the deformation parameter $\te = \te_{12} \in \rr$.
 
\begin{defn}
The coordinate algebra for the noncommutative 4-sphere is the unital $*$-subalgebra
\[
\C[\S^4_\te] = \Tex{span}_{\C} \{ f \ot U^k \mid k \in \zz^2 \, , \, \, f \in C^\infty(\S^4)_k \} \subset C^\infty( \S^4) \ot \C[ \T^2_\te].
\]
The unital $C^*$-algebra $C(\S^4_\theta)$ is defined as the completion of $\C[\S^4_\te]$ with respect to the supremum-norm
\[
\| \cd \|_{\S^4_\te} : C^\infty( \S^4) \ot \C[ \T^2_\te] \to [0,\infty) \q \| x \|_{\S^4_\te} = \sup_{p \in \S^4} \| x(p) \|_{C(\T^2_\te)}.
\]
The smooth functions on the noncommutative 4-sphere $C^\infty(\S^4_\te)$ is the unital Fr\'echet $*$-algebra obtained as the closure of the coordinate algebra $\C[\S^4_\te]$ inside the projective tensor product of Fr\'echet $*$-algebras $C^\infty(\S^4) \hot C^\infty(\T^2_\te)$.
\end{defn}

Concerning the noncommutative analogue of the principal stratum $\S^4_0$ for the $\T^2$-action on $\S^4$, we define the coordinate algebra
\[
\C_c[ (\S^4_0)_\te] = \Tex{span}_{\C} \{ f \ot U^k \mid k \in \zz^2 \, , \, \, f \in C_c^\infty(\S^4_0)_k \} \subset \C[\S^4_\te]
\]
as well as the non-unital $C^*$-subalgebra $C_0\big( (\S^4_0)_\te \big) \subset C( \S^4_\te)$ obtained by taking the $C^*$-norm-closure of the coordinate algebra $\C_c[ (\S^4_0)_\te]$. Moreover, we have the non-unital Fr\'echet $*$-algebra of smooth functions vanishing at infinity $C^\infty_0(( \S^4_0)_\te)$ obtained as the closure of $\C_c[ (\S^4_0)_\te]$ inside the Fr\'echet $*$-algebra $C^\infty(\S^4_\te)$.


The key observation needed for introducing a Dirac operator on the noncommutative 4-sphere is that the spin$^c$ structure on the commutative 4-sphere is $\T^2$-equivariant, see \cite{LS04,LS06}. Let us denote the Clifford module of smooth sections of the $\zz/2\zz$-graded spinor bundle over the 4-sphere by $\sE_{\S^4}$ and the associated $\zz/2\zz$-graded Hilbert space of $L^2$-spinors by $L^2( \sE_{\S^4})$. The $\T^2$-equivariance of the spin$^c$ structure entails the existence of a strongly continuous even unitary representation
\[
\wit \al : \T^2 \to \U( L^2(\sE_{\S^4}))
\]
which restricts to an even action $\wit \al : \T^2 \ti \sE_{\S^4} \to \sE_{\S^4}$ on the smooth sections of the spinor bundle (referred to as the spin$^c$ lift). This action lifts the action of $\T^2$ on $C^\infty(\S^4)$ in the sense that
\begin{equation}
\label{eq:equivariance}
\tilde \alpha_t( f \cdot \xi) = \alpha_t(f) \cdot \tilde \alpha_t (\xi) \q f \in C^\infty(\S^4) \, , \, \, \xi \in \sE_{\S^4} \, , \, \, t \in \T^2.
\end{equation}
Moreover, the Dirac operator $D_{\S^4}$ on the commutative 4-sphere is equivariant meaning that
\[
D_{\S^4}( \wit \al_t (\xi) ) = \wit \al_t(  D_{\S^4}(\xi) ) \q \Tex{for all } \xi \in \sE_{\S^4}\, , \, \, t \in \T^2.
\] 

In order to define a left action of the noncommutative 4-sphere $C(\S^4_\te)$ on the Hilbert space $L^2(\sE_{\S^4})$ we embed this Hilbert space into the tensor product of Hilbert spaces $L^2(\sE_{\S^4}) \hot L^2(\T^2_\te)$. To this end, for each $k \in \zz^2$, we define the spectral subspace
\[
L^2(\sE_{\S^4})_k = \{ \xi \in L^2(\sE_{\S^4}) \mid \wit \al_t(\xi) = t^k \cd \xi \Tex{ for all } t \in \T^2 \} \su L^2(\sE_{\S^4})
\]
and notice that $L^2(\sE_{\S^4})$ is unitarily isomorphic to the Hilbert space direct sum $\bop_{k \in \zz^2} L^2(\sE_{\S^4})_k$. We may thus define the isometry
\[
V : L^2(\sE_{\S^4}) \to L^2(\sE_{\S^4}) \hot L^2(\T^2_\te) \q V( \sum_{k \in \zz^2} \xi_k ) = \sum_{k \in \zz^2} \xi_k \ot U^k.
\]
The left action of $C(\S^4_\te)$ on the Hilbert space of $L^2$-spinors is then given by the $*$-homomorphism
\[
\pi : C(\S^4_\te) \to \sL( L^2(\sE_{\S^4})) \q \pi(x)(\xi) = (V^* x V)(\xi),
\]
where we are suppressing the tensor product representation of $C(\S^4_\te)$ as bounded operators on $L^2(\sE_{\S^4}) \hot L^2(\T_\te^2)$.

We quote the following result from \cite{CL01}, see also \cite{CD02}:

\begin{prop}[Connes-Landi]
The triple $( C^\infty(\S^4_\theta), L^2(\sE_{\S^4}), D_{\S^4})$ is an even $4^+$-summable spectral triple. 
\end{prop}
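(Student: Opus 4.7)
The plan is to leverage the classical spectral triple $(C^\infty(\S^4), L^2(\sE_{\S^4}), D_{\S^4})$, which is the standard even $4^+$-summable spectral triple on the round 4-sphere, and to transfer its analytic properties to the deformed setting via the isometry $V$. Since the operator $D_{\S^4}$ itself is unchanged, self-adjointness, the compact resolvent property, and $4^+$-summability all come for free from the commutative case (the latter via Weyl's law for Dirac operators on closed 4-manifolds). The grading structure likewise transfers: the spinor $\mathbb{Z}/2\mathbb{Z}$-grading is even for the torus representation $\wit \al$, hence preserves each spectral subspace $L^2(\sE_{\S^4})_k$, commutes with $V$, and anticommutes with $D_{\S^4}$ as in the classical case; finally, since $\pi(a)$ is built from $V$ and the tensor product representation, the grading commutes with $\pi(a)$ for all $a \in C^\infty(\S^4_\te)$. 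Thus the only nontrivial point is to verify that the commutators $[D_{\S^4}, \pi(a)]$ extend to bounded operators on $L^2(\sE_{\S^4})$ for every $a \in C^\infty(\S^4_\te)$.

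The key observation is that the $\T^2$-equivariance of $D_{\S^4}$ implies that each spectral subspace $L^2(\sE_{\S^4})_k$ is invariant under $D_{\S^4}$, and hence $V$ intertwines $D_{\S^4}$ with $D_{\S^4} \ot 1$ on a common core of smooth spinors. Writing $\rho$ for the tensor product representation of $C(\S^4_\te)$ on $L^2(\sE_{\S^4}) \hot L^2(\T^2_\te)$, one therefore obtains the identity
\[
[D_{\S^4}, \pi(a)] = V^*[D_{\S^4} \ot 1, \rho(a)] V .
\]
For a homogeneous element $a = f \ot U^k$ with $f \in C^\infty(\S^4)_k$ one has $\rho(a) = \pi_0(f) \ot M_{U^k}$, where $\pi_0$ denotes the Clifford multiplication of $C^\infty(\S^4)$ and $M_{U^k}$ is left multiplication on $L^2(\T^2_\te)$. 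Since $D_{\S^4} \ot 1$ commutes with $1 \ot M_{U^k}$, the commutator reduces to $[D_{\S^4}, \pi_0(f)] \ot M_{U^k}$, which is bounded by $\|[D_{\S^4}, \pi_0(f)]\| \cdot \|M_{U^k}\|$. The first factor is finite by the classical spectral triple property, and the second is identically $1$ since $U^k$ is unitary. By linearity this covers all of $\C[\S^4_\te]$.

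The remaining and most delicate step is to extend boundedness of the commutator from the coordinate algebra to the full Fr\'echet $*$-algebra $C^\infty(\S^4_\te)$. Here I would argue by continuity with respect to the seminorms defining $C^\infty(\S^4) \hot C^\infty(\T^2_\te)$. Given $a \in C^\infty(\S^4_\te)$ with Fourier decomposition $a = \sum_{k \in \zz^2} f_k \ot U^k$, where $f_k \in C^\infty(\S^4)_k$, the rapid decay of the $f_k$ in the Fr\'echet topology of $C^\infty(\S^4)$ (which is equivalent to rapid decay in the spectral subspaces) provides absolute convergence of $\sum_k \|[D_{\S^4}, \pi_0(f_k)]\|$ thanks to the standard estimate $\|[D_{\S^4}, \pi_0(f_k)]\| \leq C \cdot \|\nabla f_k\|_\infty$. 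The main obstacle is securing this rapid decay in the spectral directions uniformly, which amounts to combining smoothness in the classical directions on $\S^4$ with the Fr\'echet structure of $C^\infty(\T^2_\te)$; once this is in hand the series defining $[D_{\S^4}, \pi(a)]$ converges in operator norm and yields a bounded extension.
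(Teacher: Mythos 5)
Your proposal is correct and follows essentially the same route as the paper: both reduce the only nontrivial point (bounded commutators for $C^\infty(\S^4_\theta)$) to the classical case by using the equivariance of $D_{\S^4}$ to obtain the intertwining relations $(D_{\S^4}\hot 1)V=VD_{\S^4}$ and $D_{\S^4}V^*=V^*(D_{\S^4}\hot 1)$, and hence $[D_{\S^4},\pi(x)]=V^*[D_{\S^4}\hot 1,x]V$. Your extra care about summing the Fourier/tensor decomposition over the Fr\'echet completion is a legitimate elaboration of the step the paper compresses into ``using the tensor product representation\ldots $x$ has a bounded commutator with $D_{\S^4}\hot 1$''.
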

\begin{proof}
Since the unbounded operator $D_{\S^4}$ agrees with (the closure of) the classical Dirac operator on $\S^4$, it suffices to check that each $x \in C^\infty(\S^4_\theta)$ preserves the domain, $\Tex{Dom}(D_{\S^4})$, and has a bounded commutator with $D_{\S^4}$. To this end, we define the unbounded operator $D_{\S^4} \hot 1$ as the closure of the symmetric unbounded operator $D_{\S^4} \ot 1 : \Tex{Dom}(D_{\S^4}) \ot L^2(\T^2_\te) \to L^2(\sE_{\S^4}) \hot L^2(\T^2_\te)$. Using the tensor product representation of $C(\S^4_\te)$ on $L^2(\sE_{\S^4}) \hot L^2(\T_\te^2)$ we obtain that our $x \in C^\infty(\S^4_\te)$ preserves the domain of $D_{\S^4} \hot 1$ and has a bounded commutator with this unbounded operator. Moreover, using the equivariance of $D_{\S^4}$ with respect to the spin$^c$ lift $\wit \al$, one obtains that
\[
(D_{\S^4} \hot 1)V = V D_{\S^4} \q \Tex{and} \q D_{\S^4} V^* = V^* (D_{\S^4} \hot 1).
\]
Combining these observations, we see that $[ D_{\S^4}, \pi(x) ] = [D_{\S^4}, V^* x V] = V^* [D_{\S^4} \hot 1,x] V$ (as operators on $\Tex{Dom}(D_{\S^4})$), and the proposition is proved.
\end{proof}

%
Note that when we restrict the spinor bundle to the principal stratum $\S^4_0$ we end up with a trivial bundle $\S^4_0 \times \C^4 \to \S^4_0$. Under this identification, the spin$^c$ lift of the torus action turns out to be trivial in the fibers and only acting on the base $\S^4_0$. Thus $\tilde \alpha =  \alpha \otimes \Tex{id}$, when restricted to the smooth compactly supported sections, $C_c^\infty(\S^4_0) \otimes \C^4$, of this trivial bundle. 

Moreover, a ``local'' expression for the restriction of the Dirac operator $D_{\S^4}$ to $C_c^\infty(\S^4_0) \ot \C^4$ can be given:
\begin{align}
\label{eq:dirac-S4}
&D_{\S^4}|_{C_c^\infty(\S^4_0) \ot \C^4} = i \frac{1}{\cos \varphi \cos \psi} \gamma^1 \frac{\pa}{\pa \te_1} + i \frac{1}{\sin \varphi \cos \psi} \gamma^2 \frac{\pa}{\pa \te_2} \\ \nonumber
&\quad + i \frac{1}{\cos \psi} \gamma^3 \left( \frac{\partial}{\partial \varphi}+ \frac 12 \cot \varphi - \frac 12 \tan \varphi \right)
+ i \gamma^4 \left( \frac{\partial}{\partial \psi}- \frac 32 \tan \psi\right)
\end{align}
in terms of four flat Dirac gamma matrices $\gamma^j \in M_4(\C)$. Up to unitary equivalence, this is precisely the local expression that appeared in \cite{CL01}. 

Since the complement of the principal stratum $\S^4_0\subset \S^4$ can be written as the union 
\[
\S^4 \sem \S^4_0 = (\S^4)^{\T \ti \{0\}} \cup (\S^4)^{\{0\} \ti \T}
\]
of compact embedded submanifolds, each of codimension strictly greater than one, it follows from \cite[Proposition 30]{KS17b} that $C^\infty_c(\S^4_0) \otimes \C^4 \subset L^2(\sE_{\S^4})$ is a core for the Dirac operator $D_{\S^4}$.

\section{Factorization of the spectral triple on $\S^4_\te$}
Throughout this section $\te = \te_{12} \in \rr$ will be a fixed deformation parameter. 

As a preparation for the explicit tensor sum factorization of the Connes-Landi spectral triple \newline $(C^\infty(\S^4_\te), L^2(\sE_{\S^4}), D_{\S^4})$, we first introduce a $C^*$-correspondence $X$ from $C_0((\S^4_0)_\theta)$ to $C_0(Q^2_0)$. This $C^*$-correspondence will carry a vertical Dirac operator yielding an unbounded Kasparov module from $C_0( (\S^4_0)_\te)$ to $C_0(Q^2_0)$.

As a Hilbert $C^*$-module over $C_0(Q_0^2)$, $X$ is the completion of the right module 
\[
\cX = C_c^\infty( \S^4_0) \ot \C^2
\]
over $C_c^\infty(Q_0^2)$ with respect to the norm coming from the $C_0(Q^2_0)$-valued inner product:
\begin{equation}\label{eq:innex}
\begin{split}
& \inn{\xi , \eta}_X\big( \cos \varphi \cos \psi, \sin\varphi \cos\psi, \sin\psi\big)  \\
& \q = \int_{\T^2} (\xi^* \cd \eta)\big( t_1^{-1}\cd \cos\varphi \cos \psi, t_2^{-1} \cd \sin\varphi \cos\psi, \sin\psi \big) dt \\
& \qqq
\cd \sin \varphi \cos \varphi \cos^2 \psi \q \xi,\eta \in  \cX.
\end{split}
\end{equation}
Clearly, $X$ is $\zz/2\zz$-graded with grading operator 
\[
\ga_X = \ma{cc}{1 & 0 \\ 0 & -1}.
\]
The action $\wit \al = \al \ot \Tex{id} : \T^2 \ti \cX \to \cX$ induces an even strongly continuous action $\wit \al : \T^2 \ti X \to X$ such that $\wit \al_t : X \to X$ is a unitary operator for all $t \in \T^2$. 

In order to introduce the left action of $C( (\S_0^4)_\te)$ on $X$, we define the spectral subspace
\[
X_k = \big\{ \xi \in X \mid \wit \al_t(\xi) = t^k \cd \xi \big\}
\]
for each $k \in \zz^2$. It then holds that $X$ is unitarily isomorphic to the Hilbert $C^*$-module direct sum $\bop_{k \in \zz^2} X_k$, in particular we have a bounded adjointable isometry
\[
V : X \to X \hot L^2(\T_\te^2) \q V( \sum_{k \in \zz^2} \xi_k) = \sum_{k \in \zz^2} \xi_k \ot U^k.
\]
Notice that the Hilbert $C^*$-module $X \hot L^2(\T_\te^2)$ over $C_0(Q_0^2)$ carries a left action of the non-unital $C^*$-algebra $C_0( (\S^4_0)_\te)$ coming from the tensor product of the representations of $C_0(\S^4_0)$ of $C(\T^2_\te)$ as multiplication operators on $X$ and $L^2(\T^2_\te)$, respectively. The left action of $C_0(( \S^4_0)_\te)$ on the Hilbert $C^*$-module $X$ is then determined by the $*$-homomorphism
\begin{equation}\label{eq:leftvert}
\pi : C_0(( \S^4_0)_\te) \to \sL(X) \q \pi(x) = V^* x V.
\end{equation}

We now introduce a vertical Dirac operator $D_V : \Tex{Dom}(D_V) \to X$ as follows. Let us choose the two Pauli matrices 
\begin{equation}\label{eq:pauli}
\sigma^1 = \ma{cc}{0 & 1 \\ 1 & 0} \q \Tex{and} \q \sigma^2 = \ma{cc}{0 & -i \\ i & 0}
\end{equation}
and define the odd unbounded operator $\sD_V : \cX \to X$ by
\begin{equation}\label{eq:vertunbo}
\sD_V = i \frac{1}{\cos \varphi \cos \psi}\sigma^1 \frac{\pa}{\pa \te_1} +
 i \frac{1}{\sin \varphi \cos \psi} \sigma^2 \frac{\pa}{\pa \te_2}.
\end{equation}
This odd unbounded operator is symmetric with respect to the inner product $\langle \cdot,\cdot \rangle_X$ and we denote its closure by $D_V: \dom(D_V) \to X$. Remark that $D_V$ is equivariant in the sense that
\[
D_V \wit{\al}_t = \wit{\al}_t D_V \q \Tex{for all } t \in \T^2.
\]

The vertical part of our geometry can now be summarized in the following:

\begin{prop}\label{p:vertsphe}
The triple $\big(C^\infty_0( (\S^4_0)_\te), X,D_V\big)$ defines an even unbounded Kasparov module from $C_0((\S^4_0)_\te)$ to $C_0(Q^2_0)$ with grading operator $\ga_X$.
\end{prop}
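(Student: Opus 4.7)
The plan is to verify the four defining properties of an even unbounded Kasparov module: the $\Z/2\Z$-graded Hilbert $C_0(Q_0^2)$-module structure on $X$ and the graded $*$-homomorphism $\pi$ are explicit from the setup, so the content lies in establishing (a) that $\sD_V$ closes to a self-adjoint regular operator, (b) the bounded commutator condition $[D_V,\pi(a)] \in \L(X)$ for $a \in C^\infty_0((\S_0^4)_\theta)$, and (c) the local compactness $\pi(a)(i+D_V)^{-1} \in \K(X)$ for $a \in C_0((\S_0^4)_\theta)$. The common analytic tool is the $\T^2$-spectral decomposition $X \isom \bop_{k \in \Z^2} X_k$ together with a natural isometric isomorphism $X_k \isom C_0(Q_0^2)\ot\C^2$ of Hilbert $C_0(Q_0^2)$-modules, obtained by absorbing the weight $\sin\varphi\cos\varphi\cos^2\psi$ from \eqref{eq:innex} into the identification. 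Since $\pa/\pa\theta_j$ acts on the Fourier mode $e^{ik\cdot\theta}$ by multiplication by $ik_j$, the restriction of $\sD_V$ to $\cX \cap X_k$ becomes pointwise multiplication by the self-adjoint $M_2(\C)$-valued function
\[
M_k(\varphi,\psi) = -\tfrac{k_1}{\cos\varphi\cos\psi}\sigma^1 - \tfrac{k_2}{\sin\varphi\cos\psi}\sigma^2,
\]
which squares to $\big((k_1/(\cos\varphi\cos\psi))^2 + (k_2/(\sin\varphi\cos\psi))^2\big) \cdot I$.

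For (a), each multiplication operator $T_{M_k}$ with its natural maximal domain is self-adjoint and regular on $C_0(Q_0^2)\ot\C^2$, since $M_k$ is continuous and self-adjoint-valued, so $(M_k\pm i)^{-1}$ is a bounded continuous matrix-valued function on $Q_0^2$. The external direct sum $\bop_k T_{M_k}$ is therefore self-adjoint and regular on $X$ and odd for $\ga_X$ (because $\sigma^1,\sigma^2$ anti-commute with $\ga_X$); a cut-off argument in $Q_0^2$ combined with truncation in the Fourier index shows that $\cX$ lies in its domain and is a core, so $\bop_k T_{M_k}$ coincides with the closure $D_V$ of $\sD_V$. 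For (b), it suffices to treat the dense subalgebra $\C_c[(\S_0^4)_\theta]$ spanned by $f\ot U^k$ with $f\in C_c^\infty(\S_0^4)_k$, writing $f = e^{ik\cdot\theta}\tilde f$ for some $\tilde f \in C_c^\infty(Q_0^2)$. The equivariance identities $(D_V\hot 1)V=VD_V$ and $D_V V^*=V^*(D_V\hot 1)$ reduce the computation to $[D_V,\pi(f\ot U^k)]=V^*([D_V,f]\ot U^k)V$, and since $f$ is smooth and compactly supported inside $\S_0^4$ where the coefficients of $\sD_V$ are smooth, the Leibniz expansion presents $[D_V,f]$ as a bounded multiplication operator. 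Extension to $C^\infty_0((\S_0^4)_\theta)$ then follows from continuity in the Fr\'echet topology: its elements vanish to infinite order on $\S^4\sem\S_0^4$, so their $\theta_j$-derivatives divided by the singular coefficients $1/(\cos\varphi\cos\psi)$ and $1/(\sin\varphi\cos\psi)$ remain bounded.

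For (c), by norm-density of $\C_c[(\S_0^4)_\theta]$ in $C_0((\S_0^4)_\theta)$ and norm-closedness of $\K(X)$, it suffices to check compactness for $a=f\ot U^k$. Under the spectral decomposition, $\pi(a)(i+D_V)^{-1}$ maps $X_l$ to $X_{l+k}$ as a phase times multiplication by the compactly supported continuous $M_2(\C)$-valued function $\tilde f\cdot(i+M_l)^{-1}$ on $Q_0^2$; each such block lies in $\K(C_0(Q_0^2)\ot\C^2)$. The pointwise inequality $|M_l|^2 \geq |l|^2$ on $Q_0^2$ (valid because $\cos\varphi\cos\psi, \sin\varphi\cos\psi \leq 1$) gives $\|(i+M_l)^{-1}\|_\infty \leq 1/\sqrt{1+|l|^2}$, so the block norms are $O(1/|l|)$. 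As the blocks act on mutually orthogonal source subspaces with mutually orthogonal ranges, the partial sums over $|l|\leq N$ are compact operators converging in norm to the full operator, which is therefore compact. The principal technical obstacle throughout is the degeneration of the coefficients of $\sD_V$ at the boundary of $Q_0^2$: both the core property in (a) and the extension of the commutator bound from $\C_c[(\S_0^4)_\theta]$ to $C^\infty_0((\S_0^4)_\theta)$ in (b) require careful handling of this boundary behavior.
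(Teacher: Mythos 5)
Your proposal is correct and follows essentially the same route as the paper: the equivariance identities $(D_V\hot 1)V=VD_V$, $V^*(D_V\hot 1)=D_VV^*$ handle the commutator condition, and the Fourier decomposition $X\isom\bigoplus_k X_k$ with $\sD_V$ acting blockwise as multiplication by $M_k$ is precisely the paper's construction in undiagonalized form --- the eigenvectors $\Psi^{\pm}_{n_1,n_2}$ and eigenvalues $\mp\lambda_{n_1,n_2}$ of the paper are just the spectral decomposition of your $M_k$, and your norm-convergent block sum for $\pi(a)(i+D_V)^{-1}$ (using orthogonality of sources and ranges, so only the supremum of the tail block norms matters) is the paper's resolvent formula $\sum_{n_1,n_2}(K^+_{n_1,n_2}(\mu)+K^-_{n_1,n_2}(\mu))$. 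The one point to watch, which you correctly flag, is the boundary degeneration of the coefficients when passing from $\C_c[(\S^4_0)_\te]$ to all of $C^\infty_0((\S^4_0)_\te)$ in the commutator estimate; the paper treats this equally briefly via the first-order differential operator structure of $D_V\hot 1$.
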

\proof
Let us start by proving that each $x \in C^\infty_0( (\S^4_0)_\te)$ preserves the domain of $D_V$ and admits a bounded commutator with $D_V$. We define $D_V \hot 1 : \Tex{Dom}(D_V \hot 1) \to X \hot L^2(\T^2_\te)$ as the closure of the symmetric unbounded operator $D_V \ot 1 : \Tex{Dom}(D_V) \ot L^2(\T^2_\te) \to X \hot L^2(\T^2_\te)$. Since $D_V$ is equivariant with respect to the torus action on $X$ we obtain the identities
\[
(D_V \hot 1) V = V D_V \q V^* (D_V \hot 1) = D_V V^*
\]
on the domain of $D_V$ and the domain of $D_V \hot 1$, respectively. Now, since $D_V$ is the closure of a first order differential operator, the element $x \in C^\infty_0( (\S^4_0)_\te)$, considered as a bounded adjointable operator on $X \hot L^2(\T^2_\te)$, preserves the domain of $D_V \hot 1$ and has a bounded commutator with $x$. Our commutator statement therefore follows from the identity
\[
[ D_V, \pi( x) ] = [D_V, V^* x  V] = V^* [D_V \hot 1,x] V,
\]
which holds on the domain of $D_V$.

The fact that $D_V$ is regular and selfadjoint and that it has a locally compact resolvent follows since $D_V$ is the closure of a symmetric, vertically elliptic, first order differential operator, \cite[Theorem 3]{KS17b}. However, in the case at hand we may also prove this directly as follows:

Let us define vectors in $C^\infty(\S^4_0) \ot \C^2$ by
\[
\Psi_{n_1,n_2}^\pm = \begin{pmatrix} e^{i \te_1 n_1} e^{i \te_2 n_2} \\ \pm c(n_1,n_2) e^{i \te_1 n_1} e^{i \te_2 n_2} \end{pmatrix} \cd \frac{1}{\sqrt{2 \sin \varphi \cos \varphi \cos^2 \psi}},
\]
where
\[
c(n_1,n_2) =\fork{ccc}{ \frac{\left(\frac{n_1}{\cos \varphi} + i \frac{n_2}{\sin \varphi} \right)}{\sqrt{\frac{n_1^2}{\cos^2\varphi} + \frac{n_2^2}{\sin^2\varphi}}} & \Tex{for} & (n_1,n_2) \neq (0,0) \\ 
1 & \Tex{for} & (n_1,n_2) = (0,0)}.
\]
One then checks that 
\[
D_V \left(\Psi_{n_1,n_2}^\pm \cd f \right)  = \mp \lambda_{n_1,n_2} \cdot \Psi_{n_1,n_2}^\pm \cd f \q \Tex{for all } f \in C_c^\infty(Q_0^2),
\]
where
\[
\lambda_{n_1,n_2} = \sqrt{\frac{n_1^2}{\cos^2\varphi \cos^2\psi} + \frac{n_2^2}{\sin^2\varphi \cos^2\psi}}.
\]
Thus $\{\Psi_{n_1,n_2}^\pm\}$ is a family of (generalized) eigenvectors for $D_V$, varying over the base space $Q_0^2$ (see Figure \ref{fig:eigenval-S} for a plot of the first eigenvalues). Moreover, we remark that the $\C$-linear span
\[
\Tex{span}\big\{ \Psi_{n_1,n_2}^{\pm} \cd f \mid f \in C_c^\infty(Q_0^2) \big\}
\]
is norm-dense in $X$ and that our (generalized) eigenvectors form an orthonormal system in the sense that
\[
\begin{split}
& \inn{ \Psi^{\pm}_{n_1,n_2} \cd f, \Psi^{\pm}_{m_1,m_2} \cd g}_X = \fork{ccc}{f^* g & \Tex{for} & (n_1,n_2) = (m_1,m_2) \\ 0 & \Tex{for} & (n_1,n_2) \neq (m_1,m_2)} \q \Tex{and} \\
& \inn{ \Psi^{\pm}_{n_1,n_2} \cd f, \Psi^{\mp}_{m_1,m_2} \cd g}_X = 0 \, \, , \, \, \, n_1,n_2,m_1,m_2 \in \zz ,
\end{split}
\]
for all $f,g \in C_c^\infty(Q_0^2)$.

For each $n_1,n_2 \in \zz$ and each $\mu \in \rr\sem\{0\}$ we define the bounded adjointable operators
\[ 
\begin{split}
& K_{n_1,n_2}^+(\mu) = (i \mu  - \la_{n_1,n_2} )^{-1}\ket{\Psi_{n_1,n_2}^+} \bra{\Psi_{n_1,n_2}^+}\q \Tex{and} \\
& K_{n_1,n_2}^-(\mu) = (i \mu + \la_{n_1,n_2} )^{-1}\ket{\Psi_{n_1,n_2}^-} \bra{\Psi_{n_1,n_2}^-}
\end{split}
\]
on $X$. We emphasize that $K_{n_1,n_2}^{\pm}(\mu)$ is not a compact operator on $X$ but that $\pi(x) \cd K_{n_1,n_2}^{\pm}(\mu)$ is a compact operator on $X$ for all $x \in C_0\big( (\S_0^4)_\te\big)$.

It may then be verified that the resolvent $(i \mu + D_V)^{-1} : X \to X$ is given by the bounded adjointable operator
\[
\sum_{n_1,n_2 \in \zz} \big(K_{n_1,n_2}^+(\mu) + K_{n_1,n_2}^-(\mu) \big),
\]
where the sum converges in the operator norm on $X$. This proves that the unbounded operator $D_V$ is selfadjoint and regular since these properties are equivalent to the existence of the above resolvents, see \cite[Chapter 9]{Lan95}. Since we moreover have that 
\[
\sum_{n_1,n_2 \in \zz} \pi(x) \cd \big(K_{n_1,n_2}^+(\mu) + K_{n_1,n_2}^-(\mu) \big)
\]
is a compact operator for all $x \in C_0\big( (\S_0^4)_\te\big)$, this ends the proof of the proposition. \endproof

\begin{figure}
\includegraphics[scale=.4]{./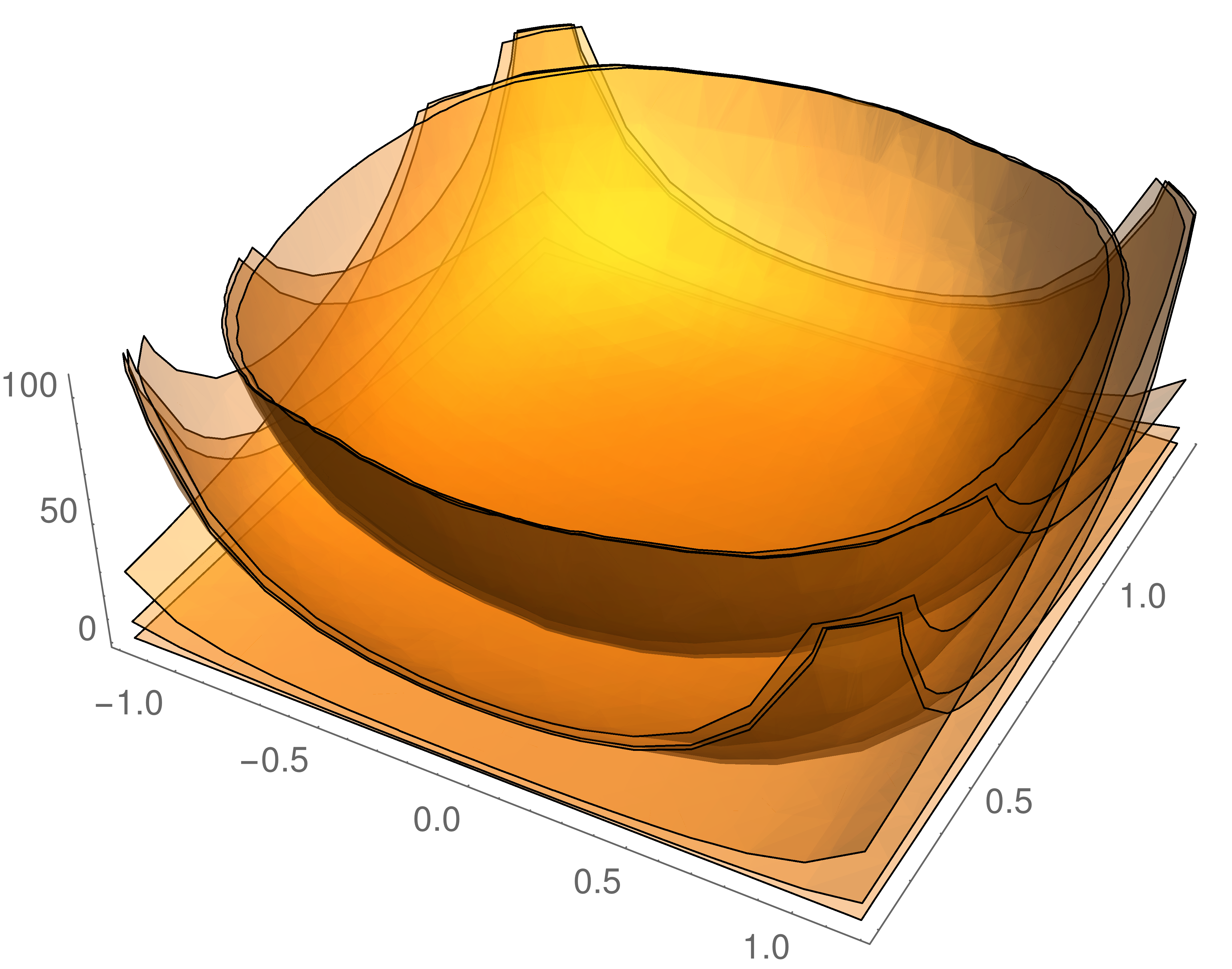}
\caption{The first few families of eigenvalues of $D_V$, varying over the quadrant $Q_0^2$.}
\label{fig:eigenval-S}
\end{figure}

We now turn to the horizontal part of our geometry.

We define the unbounded operator $\sD_{Q_0^2} : C_c^\infty (Q^2_0) \otimes \C^2 \to L^2(Q^2_0) \otimes \C^2$ as the restriction of the Dirac operator on $\S^2$ to the open quadrant $Q^2_0$, thus
\[
\sD_{Q_0^2}  = i \frac{1}{\cos\psi} \sigma^1 \frac{\pa}{\pa \varphi} + i \sigma^2 \left( \frac{\pa}{\pa \psi} - \frac12 \tan \psi \right),
\]
where $\si^1$ and $\si^2 \in M_2(\C)$ are the Pauli matrices from Equation \eqref{eq:pauli}. The unbounded operator $\sD_{Q_0^2}$ is symmetric and it is odd with respect to the $\zz/2\zz$-grading of the Hilbert space $L^2(Q_0^2) \ot \C^2$ given by the grading operator 
\[
\ga_{Q_0^2} = \ma{cc}{1 & 0 \\ 0 & -1}.
\]
We denote the closure of $\sD_{Q_0^2}$ by $D_{Q_0^2}: \dom(D_{Q_0^2}) \to  L^2(Q^2_0) \otimes \C^2$. We recall that the inner product on $L^2(Q_0^2)$ comes from the Riemannian metric $g_{Q_0^2} = \cos^2 \psi ~ d \varphi^2 + d \psi^2$ and that $C_0(Q_0^2)$ acts from the left on $L^2(Q_0^2)$ as pointwise multiplication operators.

The properties of our horizontal data can then be summarized in the following proposition, see \cite{BDT89,Hil10}:

\begin{prop}
The triple $(C_c^\infty(Q_0^2), L^2(Q_0^2) \otimes \C^2, D_{Q_0^2})$ defines an even half-closed chain from $C_0(Q^2_0)$ to $\C$ with grading operator $\ga_{Q_0^2}$. 
\end{prop}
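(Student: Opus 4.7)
The strategy is to realise $\sD_{Q_0^2}$ as the restriction to an open subset of the classical Dirac operator on the round 2-sphere $\S^2$, and then to transfer the relevant analytic properties from the closed manifold $\S^2$ down to the open quadrant $Q_0^2$. Concretely, I would identify $Q_0^2$ with the open subset $\{0<\varphi<\pi/2,\,-\pi/2<\psi<\pi/2\}$ of $\S^2$, and use extension by zero to embed $L^2(Q_0^2)\otimes \C^2$ isometrically as a closed subspace of $L^2(\E_{\S^2})$, where $\E_{\S^2}$ denotes the (here trivialised) $\Z/2\Z$-graded spinor bundle on $\S^2$. Under this embedding $\sD_{Q_0^2}$ becomes precisely the restriction of the self-adjoint Dirac operator $D_{\S^2}$ to the core $C_c^\infty(Q_0^2)\otimes \C^2$; hence $\sD_{Q_0^2}$ is symmetric and odd with respect to $\ga_{Q_0^2}$, and its closure $D_{Q_0^2}$ is a closed symmetric restriction of $D_{\S^2}$.

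For the bounded-commutator condition, any $f\in C_c^\infty(Q_0^2)$ extends by zero to $C^\infty(\S^2)$ with $df$ still compactly supported in $Q_0^2$, so on the core one obtains
\[
[\sD_{Q_0^2},f] \;=\; -i \cdot c(df),
\]
where $c(df)$ denotes Clifford multiplication, a bounded operator. Multiplication by $f$ preserves $C_c^\infty(Q_0^2)\otimes \C^2$, hence by graph-norm continuity also preserves $\dom(D_{Q_0^2})$, and the commutator identity extends by continuity to all of $\dom(D_{Q_0^2})$.

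For the local compactness condition, I would invoke the Rellich lemma on the closed manifold $\S^2$ to obtain that $(1+D_{\S^2}^2)^{-1/2}$ is compact on $L^2(\E_{\S^2})$. Any $f\in C_0(Q_0^2)$ extends by zero to a continuous function on $\S^2$ vanishing on $\partial Q_0^2$, so that $f\cdot(1+D_{\S^2}^2)^{-1/2}$ is compact; a comparison of graph norms then shows that the corresponding operator $\pi(f)\cdot (1+D_{Q_0^2}^* D_{Q_0^2})^{-1/2}$ is likewise compact on $L^2(Q_0^2)\otimes \C^2$.

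The most delicate step is the compactness one. Since $Q_0^2$ has a nontrivial topological boundary inside $\S^2$, the operator $D_{Q_0^2}$ is in general not essentially self-adjoint, so one cannot simply identify $(1+D_{Q_0^2}^* D_{Q_0^2})^{-1/2}$ with a restriction of $(1+D_{\S^2}^2)^{-1/2}$ to the invariant subspace. The framework of half-closed chains developed in \cite{BDT89,Hil10} is precisely designed to accommodate this subtlety: the compactness is verified through an interior elliptic regularity argument, using that elements of $C_0(Q_0^2)$ vanish at $\partial Q_0^2\subset \S^2$, which is what allows one to control the graph norm of $D_{Q_0^2}^* D_{Q_0^2}$ in terms of the graph norm of $D_{\S^2}$ after cutting off by $f$.
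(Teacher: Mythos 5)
Your overall strategy --- realizing $\sD_{Q_0^2}$ as the restriction of the self-adjoint Dirac operator $D_{\S^2}$ to the open quadrant and importing symmetry, ellipticity and Rellich compactness from the closed manifold --- is exactly the route the paper intends: the paper gives no proof beyond the citation of \cite{BDT89,Hil10}, where this is the standard construction of a relative (half-closed) cycle from a symmetric elliptic first-order operator on an open manifold. There is, however, one genuine gap, and it concerns the condition that distinguishes a half-closed chain from a spectral triple. The definition (see \cite{Hil10} and \cite{KS17a}) requires that every $f$ in the dense subalgebra maps the domain of the \emph{adjoint} $D_{Q_0^2}^*$ --- the maximal domain of the differential expression on $Q_0^2$ --- into $\dom(D_{Q_0^2})$, the minimal domain obtained by closing up from $C_c^\infty(Q_0^2)\otimes\C^2$. (The paper itself flags this as the ``extra detail'' to be checked in the proof of Lemma \ref{l:tensorpull}.) Your commutator paragraph only shows that $f$ preserves $\dom(D_{Q_0^2})$ by graph-norm continuity from the core; since $D_{Q_0^2}$ is not essentially self-adjoint, $\dom(D_{Q_0^2}^*)$ is strictly larger and is not reached by any continuity argument starting from $C_c^\infty(Q_0^2)\otimes\C^2$.

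The missing step is precisely where interior elliptic regularity must be invoked: for $\xi\in\dom(D_{Q_0^2}^*)$ and $f\in C_c^\infty(Q_0^2)$, the section $f\xi$ is compactly supported in $Q_0^2$ and satisfies $D_{Q_0^2}^*(f\xi)=fD_{Q_0^2}^*\xi - i\,c(df)\xi\in L^2$, and Friedrichs' mollification lemma for first-order elliptic operators (essentially \cite{BDT89}, Proposition 3.1) shows that a compactly supported element of the maximal domain lies in the minimal domain; this also yields boundedness of $[D_{Q_0^2}^*,f]$ on all of $\dom(D_{Q_0^2}^*)$. By contrast, you direct your regularity remarks at the compactness condition, where your argument is already essentially complete: extension by zero embeds $\dom(D_{Q_0^2})$ isometrically for the graph norms into $\dom(D_{\S^2})$, the operator $(1+D_{Q_0^2}^*D_{Q_0^2})^{-1/2}$ is a contraction from $L^2(Q_0^2)\otimes\C^2$ into $\dom(D_{Q_0^2})$ with its graph norm, and Rellich on $\S^2$ finishes the job --- no boundary subtlety arises there. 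Redirect the elliptic-regularity input to the domain condition $f\cdot\dom(D_{Q_0^2}^*)\subseteq\dom(D_{Q_0^2})$ and the proof is complete.
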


We record that the right module structure on $\cX$ induces an equivariant even isomorphism
\begin{equation}
\label{eq:map-u-S4}
W : \cX \otimes_{C_c^\infty(Q^2_0)} ( C_c^\infty(Q^2_0) \otimes \C^2)  \to C_c^\infty(\S^4_0) \otimes \C^4 ,
\end{equation}
which extends to a unitary isomorphism 
\[
W : X \hot_{C_0(Q^2_0)} ( L^2(Q_0^2) \otimes \C^2 ) \to L^2(\S^4_0)\otimes \C^4
\]
of $\zz/2\zz$-graded $C^*$-correspondences from $C_0( (\S^4_0)_\te)$ to $\C$. We are here also identifying $\C^2 \ot \C^2$ with $\C^4$ via the map 
\[
\la \ot \ma{c}{\mu_1 \\ \mu_2} \mapsto \ma{c}{\la \mu_1 \\ \la \mu_2}.
\]
Remark that the grading on $X \hot_{C_0(Q_0^2)} (L^2(Q_0^2) \ot \C^2)$ is given by the tensor product 
\[
\ga = \ga_X \hot \ga_{Q_0^2}
\]
of grading operators and hence that the grading operator on $L^2(\S^4_0) \ot \C^4$ is given by
\[
\ga = \ma{cccc}{1 & 0 & 0 & 0 \\ 0 & -1 & 0 & 0 \\ 0 & 0 & -1 & 0 \\ 0 & 0 & 0 & 1}.
\]
 In particular, we have the even selfadjoint unitary operator
\[
\begin{split}
\Ga = (\ga_X \hot 1) \frac{1 + \ga}{2} + \frac{1-\ga}{2} & : X \hot_{C_0(Q^2_0)} ( L^2(Q_0^2) \otimes \C^2 ) \\
& \q \to X \hot_{C_0(Q^2_0)} ( L^2(Q_0^2) \otimes \C^2 ).
\end{split}
\]
\medskip

In order to introduce the tensor sum $D_V \times_\nabla D_{Q_0^2}$ we need to lift the operators $D_V$ and $D_{Q_0^2}$ to the interior tensor product\newline $X \hot_{C_0(Q^2_0)} ( L^2(Q^2_0) \otimes \C^2)$. 

For $D_V$ this is straightforward, we simply consider the symmetric unbounded operator 
\[
\begin{split}
& \sD_V \otimes 1 : \cX \otimes_{C_c^\infty(Q^2_0)} ( C_c^\infty(Q^2_0) \otimes \C^2) \to X \hot_{C_0(Q_0^2)} ( L^2(Q^2_0) \otimes \C^2 ) \\
& (\sD_V \ot 1) (\xi \ot \eta) = \sD_V(\xi) \ot \eta.
\end{split}
\]
After identifying the domain of $\sD_V \ot 1$ with $C_c^\infty(\S^4_0) \otimes \C^4$ using the isomorphism $W$ defined in Equation \eqref{eq:map-u-S4}, we have that
\begin{equation}
\label{eq:tensor-sum-S-S4}
W \Ga (\sD_V \ot 1) \Ga W^*(s) = i \frac{1}{\cos \varphi \cos \psi} \gamma^1 \frac{\pa}{\pa \te_1} (s) + i \frac{1}{\sin \varphi \cos \psi} \gamma^2 \frac{\pa}{\pa \te_2}(s),
\end{equation}
for all $s \in C_c^\infty(\S^4_0) \otimes \C^4$, upon writing
\[
\gamma^1 = \ma{cccc}{0 & 1 & 0 & 0 \\ 1 & 0 & 0 & 0 \\ 0 & 0 & 0 & -1 \\ 0 & 0 & -1 & 0} \q \Tex{and} \q 
\gamma^2= \ma{cccc}{0 & - i & 0 & 0 \\ i & 0 & 0 & 0 \\ 0 & 0 & 0 & i \\ 0 & 0 & -i & 0}.
\]

To lift the unbounded operator $D_{Q_0^2}$ to the interior tensor product, we need an even and equivariant metric connection on the Hilbert $C^*$-module $X$. In view of the expression \eqref{eq:mean-curv-S4} for the mean curvature, as in \cite{KS17b}, we introduce the even equivariant connection:
\begin{align*}
& \nabla_{\pa/\pa\varphi} = \frac{\pa}{\pa \varphi} + \frac 12 k\left( \frac{\pa}{\pa\varphi}\right) =  \frac{\pa}{\pa \varphi} + \frac 12 \cot \varphi - \frac 12 \tan \varphi\\
&\nabla_{\pa/\pa\psi} = \frac{\pa}{\pa \psi} + \frac 12 k\left( \frac{\pa}{\pa\psi}\right) =  \frac{\pa}{\pa \psi} - \tan \psi\\
\end{align*}
as $\C$-linear maps from $\cX \to X$. It can be checked directly, using the form of the inner product on $X$ from Equation \eqref{eq:innex}, that this connection is also metric, thus that
\[
\begin{split}
\pa/\pa \varphi\big( \inn{\xi,\eta}_X \big) & = 
\binn{\Na_{\pa/\pa \varphi} (\xi), \eta}_X + \binn{\xi, \Na_{\pa/\pa \varphi} (\eta)}_X \q \Tex{and} \\
\pa/\pa \psi\big( \inn{\xi,\eta}_X \big) & = 
\binn{\Na_{\pa/\pa \psi} (\xi), \eta}_X + \binn{\xi, \Na_{\pa/\pa \psi} (\eta)}_X
\end{split}
\]
for all $\xi,\eta \in \cX$. The resulting symmetric unbounded operator $1 \ot_\nabla \sD_{Q_0^2}$ is given by
\begin{equation}\label{eq:tensor-sum-T-S4}
\begin{split}
W(1 \ot_\nabla \sD_{Q_0^2})W^*(s) & =  i \frac{1}{\cos \psi} \gamma^3 \left( \frac{\partial}{\partial \varphi} + \frac 12 \cot \varphi - \frac 12 \tan \varphi \right)s \\
& \q + i \gamma^4 \left( \frac{\partial}{\partial \psi}- \frac 32 \tan \psi\right)s,
\end{split}
\end{equation}
for all $s \in C_c^\infty(\S^4_0) \otimes \C^4$, where we have written 
\[
\gamma^3 = \ma{cccc}{0 & 0 & 1 & 0 \\ 0 & 0 & 0 & 1 \\ 1 & 0 & 0 & 0 \\ 0 & 1 & 0 & 0} \q \Tex{and} \q
\ga^4 = \ma{cccc}{0 & 0 & -i & 0 \\ 0 & 0 & 0 & -i \\ i & 0 & 0 & 0 \\ 0 & i & 0 & 0}.
\]
The tensor sum we are after is given by 
\begin{align*}
&\sD_V \times_\nabla \sD_{Q_0^2} = \sD_V \ot 1 + (\ga_X \ot 1)(1 \ot_\nabla \sD_{Q_0^2})
\\
& \qquad : \cX \otimes_{C_c^\infty(Q^2_0)} ( C_c^\infty(Q^2_0 )\otimes \C^2) \to X \otimes_{C_0(Q^2_0)} ( L^2(Q^2_0)\otimes \C^2).
\end{align*}
Clearly $\sD_V \times_\nabla \sD_{Q_0^2}$ is a symmetric operator and we denote its closure by 
\[
D_V \times_\nabla D_{Q_0^2}: \dom(D_V \times_\nabla D_{Q_0^2}) \to X \otimes_{C_0(Q^2_0)} ( L^2(Q^2_0)\otimes \C^2).
\]

\begin{thm}
\label{thm:tensor-sum}
We have the equality of selfadjoint unbounded operators
\[
W \Ga (D_V \times_\nabla D_{Q_0^2}) \Ga W^* = D_{\S^4}.
\]
\end{thm}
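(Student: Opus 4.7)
The strategy is to reduce the theorem to a pointwise identity on the dense subspace $C_c^\infty(\S^4_0) \otimes \C^4$ (identified with its preimage under $W$) and then upgrade to an equality of selfadjoint operators by invoking the fact, recorded at the end of Section~2, that this subspace is a core for $D_{\S^4}$. Explicitly, I would first check that $W^*(C_c^\infty(\S^4_0) \otimes \C^4)$ lies in the domain of the symmetric operator $\sD_V \times_\nabla \sD_{Q_0^2}$, which is immediate from the definitions. Once the pointwise identity $W\Ga(\sD_V \times_\nabla \sD_{Q_0^2})\Ga W^* s = D_{\S^4}\,s$ has been verified on this subspace, the closed symmetric operator $W\Ga(D_V \times_\nabla D_{Q_0^2})\Ga W^*$ is seen to be a closed symmetric extension of the selfadjoint operator $D_{\S^4}$, and must therefore coincide with $D_{\S^4}$. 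This simultaneously establishes the equality and the selfadjointness of the left-hand side.

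The heart of the proof is thus the algebraic identity on the core. Expanding the tensor sum, the claim splits as
\[
W \Ga (\sD_V \otimes 1)\Ga W^* \;+\; W \Ga (\ga_X \otimes 1)(1 \otimes_\nabla \sD_{Q_0^2})\Ga W^* \;=\; D_{\S^4} \quad \text{on}\ C_c^\infty(\S^4_0) \otimes \C^4.
\]
Equation~\eqref{eq:tensor-sum-S-S4} already identifies the first summand with the two ``vertical'' terms in the local expression~\eqref{eq:dirac-S4} for $D_{\S^4}$. For the second summand it suffices to prove the purely operator-algebraic identity
\[
\Ga (\ga_X \otimes 1)(1 \otimes_\nabla \sD_{Q_0^2}) \Ga \;=\; 1 \otimes_\nabla \sD_{Q_0^2},
\]
after which Equation~\eqref{eq:tensor-sum-T-S4} supplies the two remaining ``horizontal'' terms of~\eqref{eq:dirac-S4}.

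To prove this identity I would use the decomposition $\Ga = (\ga_X \otimes 1)P_+ + P_-$ with $P_\pm = (1 \pm \ga)/2$, together with three commutation relations: $\ga_X \otimes 1$ commutes with $\ga$ and hence with $P_\pm$; the lifted operator $1 \otimes_\nabla \sD_{Q_0^2}$ is $\ga$-odd, so $P_\pm (1 \otimes_\nabla \sD_{Q_0^2}) = (1 \otimes_\nabla \sD_{Q_0^2}) P_\mp$; and $1 \otimes_\nabla \sD_{Q_0^2}$ commutes with $\ga_X \otimes 1$ because the connection $\nabla$ is even. A short expansion of the product then collapses to $1 \otimes_\nabla \sD_{Q_0^2}$.

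The main obstacle is just the sign and ordering bookkeeping around $\Ga$: the unitary $\Ga$ is engineered precisely so as to intertwine the graded tensor product structure with the ordinary tensor product, and one needs to track carefully how it interacts with the ``Clifford twist'' factor $\ga_X \otimes 1$ appearing in the definition of the tensor sum. Once this cancellation is in hand, no further differential-geometric input beyond the explicit local formulas~\eqref{eq:dirac-S4}, \eqref{eq:tensor-sum-S-S4} and~\eqref{eq:tensor-sum-T-S4} is required, and the core property of $C_c^\infty(\S^4_0) \otimes \C^4$ closes the argument.
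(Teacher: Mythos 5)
Your proposal is correct and follows essentially the same route as the paper: verify the identity on the common core $C_c^\infty(\S^4_0)\otimes\C^4$ using Equations \eqref{eq:tensor-sum-S-S4}, \eqref{eq:tensor-sum-T-S4} and \eqref{eq:dirac-S4} together with the conjugation identity $\Ga(\ga_X\otimes 1)(1\otimes_\nabla \sD_{Q_0^2})\Ga = 1\otimes_\nabla \sD_{Q_0^2}$, and then pass to closures via the core property of that subspace. Your final step (a closed symmetric extension of a selfadjoint operator must equal it) is a harmless repackaging of the paper's observation that the two cores correspond under $W\Ga$, and your explicit verification of the $\Ga$-identity via the commutation relations with $P_\pm=(1\pm\ga)/2$ correctly fills in a detail the paper only asserts.
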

\proof
Since $\cX \otimes_{C_c^\infty(Q^2_0)} C_c^\infty(Q^2_0) \otimes \C^2$ is a core for $D_V \times_\nabla D_{Q_0^2}$ and since this core is isomorphic to the core $C_c^\infty(\S^4_0) \otimes \C^4$ of $D_{\S^4}$ via the unitary operator 
\[
W \Ga : X \hot_{C_0(Q_0^2)} ( L^2(Q_0^2) \ot \C^2) \to L^2( \S_0^4) \ot \C^4,
\]
it suffices to check the desired identity on the core $C_c^\infty(\S^4_0) \ot \C^4$. But here it follows readily from Equations \eqref{eq:tensor-sum-S-S4}, \eqref{eq:tensor-sum-T-S4} and the form of the Dirac operator in \eqref{eq:dirac-S4} that
\[
(W \Ga (D_V \times_\nabla D_{Q_0^2}) \Ga W^*)(s) = D_{\S^4}(s) \q s \in C_c^\infty(\S^4_0) \ot \C^4.
\]
Notice to this end that $\Ga (\ga_X \ot 1)(1 \ot_\nabla \sD_{Q_0^2}) \Ga = 1 \ot_\Na \sD_{Q_0^2}$.
\endproof

We will now put the above factorization result in the context of spectral triples and KK-theory. First, consider the map $C^\infty_c(\S^4_0) \to C^\infty(\S^4)$ given by extension by zero. Since this map is $\T^2$-equivariant and continuous, it induces a  continuous $*$-homomorphism $\io : C^\infty_0((\S^4_0)_\theta) \to C^\infty(\S^4_\theta)$ (and a $*$-homomorphism $\io : C_0((\S^4_0)_\theta) \to C(\S^4_\theta)$ at the level of $C^*$-algebras). We may use it to pullback the even spectral triple $(C^\infty(\S^4_\theta), L^2(\sE_{\S^4}), D_{\S^4})$ to an even spectral triple 
\[
\io^*( C^\infty(\S^4_\te), L^2(\sE_{\S^4}), D_{\S^4} ) = ( C^\infty_0((\S^4_0)_\theta), L^2(\sE_{\S^4}), D_{\S^4}) .
\]
At the level of bounded $KK$-theory this pullback operation corresponds to the usual pullback homomorphism
\[
\io^* : KK_0( C(\S^4_\theta), \C) \to KK_0( C_0((\S^4_0)_\theta), \C).
\]
From the above theorem we thus obtain (up to unitary equivalence) that
\begin{equation}\label{eq:restric}
\begin{split}
& \io^*( C^\infty(\S^4_\theta), L^2(\sE_{\S^4}), D_{\S^4} ) \\
& \q = ( C^\infty_0((\S^4_0)_\theta), X \hot_{C_0(Q_0^2)} (L^2(Q_0^2) \ot \C^2), D_V \ti_{\Na} D_{Q_0^2}).
\end{split}
\end{equation}
Moreover, our tensor sum decomposition of $D_{\S^4}$ corresponds to the interior Kasparov product in bivariant K-theory, as we now show:

Let us denote the classes in even $KK$-theory associated to the unbounded Kasparov module \newline $(C^\infty_0((\S^4_0)_\te), X,D_V)$, the half-closed chain $(C^\infty_c(Q_0^2), L^2(Q_0^2) \ot \C^2, D_{Q_0^2})$ and the spectral triple $(C^\infty(\S^4_\te) , L^2(\sE_{\S^4}), D_{\S^4})$ by 
\[
\begin{split}
& [(D_V)_\te] \in KK_0(C_0( (\S^4_0)_\te), C_0(Q_0^2)) \\
& [D_{Q_0^2}] \in KK_0( C_0(Q_0^2),\C) \q \Tex{and} \q [D_{\S^4_\te}] \in KK_0( C(\S^4_\te),\C),
\end{split}
\]
respectively.

\begin{thm}
\label{thm:fact}
The even spectral triple $( C^\infty_0((\S^4_0)_\theta), L^2(\sE_{\S^4}), D_{\S^4})$ is the unbounded Kasparov product of the even unbounded Kasparov module $(C^\infty_0((\S^4_0)_\theta),X,D_V)$ and the even half-closed chain $(C_c^\infty(Q_0^2), L^2(Q^2_0) \otimes \C^2, D_{Q_0^2})$. In particular it holds that
\[
\io^*[ D_{\S^4_\te}] = [ (D_V)_\te] \hot_{C_0(Q_0^2)} [D_{Q_0^2}]
\]
at the level of bounded $KK$-theory. 
\end{thm}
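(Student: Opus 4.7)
The plan is to deduce Theorem \ref{thm:fact} from Theorem \ref{thm:tensor-sum} by verifying that the tensor sum $D_V \ti_\Na D_{Q_0^2}$ satisfies an appropriate unbounded Kasparov product criterion of Kucerovsky type, adapted to the half-closed chain situation as developed in \cite{KS17a}. Concretely, Theorem \ref{thm:tensor-sum} together with the unitary equivalence \eqref{eq:restric} identifies the restricted spectral triple $\io^*(C^\infty(\S^4_\te), L^2(\sE_{\S^4}), D_{\S^4})$ with the tensor sum triple $(C^\infty_0((\S^4_0)_\te), X \hot_{C_0(Q_0^2)} (L^2(Q_0^2) \ot \C^2), D_V \ti_\Na D_{Q_0^2})$. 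Hence it is enough to prove that the latter is an unbounded representative of the Kasparov product $[(D_V)_\te] \hot_{C_0(Q_0^2)} [D_{Q_0^2}]$; the bounded $KK$-theoretic identity $\io^* [ D_{\S^4_\te}] = [(D_V)_\te] \hot_{C_0(Q_0^2)} [D_{Q_0^2}]$ then follows from the functoriality of the bounded transform.

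The first step is to verify the connection condition. For $\xi$ in a dense submodule of the smooth module $\cX$ (for example in the linear span of the generalised eigenvectors $\Psi^{\pm}_{n_1,n_2} \cd f$ introduced in the proof of Proposition \ref{p:vertsphe} with $f \in C_c^\infty(Q_0^2)$) one considers the creation operator $T_\xi : L^2(Q_0^2) \ot \C^2 \to X \hot_{C_0(Q_0^2)} (L^2(Q_0^2) \ot \C^2)$, $T_\xi(\eta) = \xi \ot \eta$. One needs to check that the graded commutator of $T_\xi$ with $D_V \ti_\Na D_{Q_0^2}$ extends to a bounded operator on the domain of $D_{Q_0^2}$. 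Using the explicit formula \eqref{eq:tensor-sum-T-S4}, this commutator is given by Clifford multiplication by the horizontal derivatives $\Na_{\pa/\pa \varphi}(\xi)$ and $\Na_{\pa/\pa \psi}(\xi)$, which are again elements of the smooth module $\cX$ and hence act as bounded adjointable operators.

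The second and more delicate step is the local positivity (or semi-boundedness) condition. On the common core $C_c^\infty(\S^4_0)\ot \C^4$ one must control the anticommutator of the vertical operator $\sD_V \ot 1$ with the horizontal twist $(\ga_X \ot 1)(1 \ot_\Na \sD_{Q_0^2})$ from below, modulo bounded perturbations. The cross-terms one produces are precisely those governed by the mean curvature \eqref{eq:mean-curv-S4}, by the second fundamental form $S$, and by the curvature $\Omega$ of the Riemannian submersion $\pi_0 : \S^4_0 \to Q_0^2$. A crucial simplification here is the remark following Equation \eqref{eq:mean-curv-S4}: the curvature $\Omega$ vanishes because the horizontal vector fields $\pa/\pa \varphi$ and $\pa/\pa \psi$ commute, and the horizontal and vertical coordinate vector fields also commute in the toroidal parametrisation. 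As a consequence, the cross-term reduces to bounded Clifford multiplications by the mean curvature functions, so the positivity condition holds after subtraction of a bounded operator.

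Combining these verifications, the general Kucerovsky-type criterion for the unbounded Kasparov product of an unbounded Kasparov module with a half-closed chain applies, showing that $(C^\infty_0((\S^4_0)_\te), X \hot_{C_0(Q_0^2)} (L^2(Q_0^2) \ot \C^2), D_V \ti_\Na D_{Q_0^2})$ represents the interior product $[(D_V)_\te] \hot_{C_0(Q_0^2)} [D_{Q_0^2}]$; the identity at the level of bounded $KK$-theory then follows from \eqref{eq:restric}. The main obstacle I expect is the half-closed nature of the horizontal triple, which means one cannot appeal directly to the classical Kucerovsky criterion; instead one must work with the refinement from \cite{KS17a}, where symmetric operators with locally compact resolvent replace selfadjoint ones. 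Checking that all resolvent and domain manipulations remain valid in this weaker setting, and that the relevant positivity estimate can be established on the core $C_c^\infty(\S^4_0)\ot \C^4$ rather than on the full self-adjoint domain, is the technical heart of the argument.
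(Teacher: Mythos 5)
Your overall strategy is the same as the paper's: deduce the theorem from Theorem \ref{thm:tensor-sum} and the identification \eqref{eq:restric} by verifying the connection condition and the local positivity condition of the half-closed-chain version of Kucerovsky's theorem from \cite{KS17a}. Your treatment of the connection condition is essentially the paper's computation and is fine. The gap is in the positivity step.

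You claim that, since the curvature $\Omega$ of the submersion vanishes, the cross-term ``reduces to bounded Clifford multiplications by the mean curvature functions, so the positivity condition holds after subtraction of a bounded operator.'' This is not correct, for two reasons. First, the anticommutator of $\sD_V \ot 1$ with $(\ga_X \ot 1)(1 \ot_\Na \sD_{Q_0^2})$ is \emph{not} a multiplication operator: computing on the core one finds
\[
(\sD_V \ot 1)(\ga_X \ot_\Na \sD_{Q_0^2}) + (\ga_X \ot_\Na \sD_{Q_0^2})(\sD_V \ot 1)
= i \ga_X \sD_V' \ot \sT' - i \ga_X \sD_V'' \ot \sT'',
\]
where $\sD_V'$ and $\sD_V''$ are the two first-order vertical differential operators with $\sD_V = \sD_V' + \sD_V''$ and $\sT'$, $\sT''$ are multiplications built from $\tan\varphi/\cos\psi$, $\cot\varphi/\cos\psi$ and $\tan\psi$. (These cross-terms come from the commutators $[\Na_{\pa/\pa\varphi},\sD_V]$ and $[\Na_{\pa/\pa\psi},\sD_V]$, i.e.\ from the second fundamental form; the vanishing of $\Omega$ is what makes the tensor-sum identity in Theorem \ref{thm:tensor-sum} exact, but it does not make the anticommutator bounded.) Because the vertical derivatives survive, no subtraction of a bounded operator yields the lower bound; one needs the completion-of-squares identity
\[
\sD_V^2 \ot 1 + \tfrac{1}{4}\ot\big((\sT')^2 + (\sT'')^2\big) + \{\text{anticommutator}\}
= A^* A + B^* B
\]
with $A = \sD_V' \ot 1 - \tfrac{i}{2}\ga_X \ot \sT'$ and $B = \sD_V'' \ot 1 + \tfrac{i}{2}\ga_X \ot \sT''$, which shows the anticommutator is bounded below by $-\tfrac{1}{4}\ot((\sT')^2+(\sT'')^2)$. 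Second, the functions $\tan\varphi$, $\cot\varphi$ and $\tan\psi$ are \emph{unbounded} on the open quadrant $Q_0^2$, so even the zeroth-order part is not globally bounded; this is precisely why the criterion is a \emph{local} positivity condition. One must fix a localizing subset $\La = \{(f_n \ci q)\ot 1\}$ coming from a compactly supported smooth approximate identity of $C_0(Q_0^2)$, take $\ka_x$ to dominate $\tfrac14((\sT')^2+(\sT'')^2)$ on the compact support of $f_n$, and establish the estimate only for vectors in the range of $\pi(x)\hot 1$; one also has to record the domain inclusion $\dom(D_V \ti_\Na D_{Q_0^2}) \cap \mathrm{ran}(\pi(x)\hot 1) \su \dom(D_V \hot 1)$ together with the G\aa rding-type inequality \eqref{eq:gaar}, which is part of the hypothesis in \cite{KS17a}. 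With the localization and the completion of squares supplied, your argument becomes the paper's proof.
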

\proof
We will show that the class $[(D_V\times_\nabla D_{Q_0^2})_\te]$ in $KK_0( C_0((\S^4_0)_\te),\C)$ represented by the even spectral triple $( C^\infty_0((\S^4_0)_\theta), X \hot_{C_0(Q_0^2)} (L^2(Q_0^2) \ot \C^2), D_V \ti_{\Na} D_{Q_0^2})$ is the KK-product of the classes $[(D_V)_\te]$ and $[D_{Q_0^2}]$ using a generalization to half-closed chains of a theorem by Kucerovsky \cite[Theorem 13]{Kuc97}. This generalization was proved recently by the authors, see \cite{KS17a}. 

Thus, we need to check the connection condition, \cite[Definition 27]{KS17a}, and the local positivity condition, \cite[Definition 29]{KS17a}, in the case at hand.

The connection condition is satisfied by a standard computation so we focus on the more subtle local positivity condition. Indeed, the connection condition follows since, for each $\xi \in \cX$, we have that
\[
\begin{split}
& (D_V \ti_{\Na} D_{Q_0^2})(\xi \ot \eta) - \ga_X(\xi) \ot D_{Q_0^2}(\eta) \\
& \q = D_V(\xi) \ot \eta + \ga_X  \Na_{\frac{\pa}{\pa \varphi}}(\xi) \cd \frac{1}{\cos \psi} \ot i \si^1 \cd \eta
+ \ga_X \Na_{\frac{\pa}{\pa \psi}}(\xi) \ot i \si^2 \cd \eta,
\end{split}
\]
for all $\eta \in C_c^\infty(Q_0^2) \ot \C^2$.

To verify the local positivity condition, we choose a countable approximate identity $\{ f_n \}$ for the $C^*$-algebra $C_0(Q_0^2)$ such that each $f_n$ is a smooth compactly supported function on $Q_0^2$. Letting $q : \S_0^4 \to Q_0^2$ denote the quotient map we then have that
\[
\La = \{ (f_n \ci q) \ot 1 \in \C_c[ (\S_0^4)_\te] \mid n \in \nn \} \su C^\infty_0( (\S_0^4)_\te)
\]
is a localizing subset in the sense of \cite[Definition 28]{KS17a}. Indeed, one verifies immediately that the commutator
\[
\begin{split}
[ \sD_V \ot 1 , \pi(x) \ot 1 ] & : \cX \ot_{C_c^\infty(Q_0^2)} ( C_c^\infty(Q_0^2) \ot \C^2) \\
& \q \to X \hot_{C_0(Q_0^2)} (L^2(Q_0^2) \ot \C^2)
\end{split}
\]
is trivial for all $x \in \La$, where the representation $\pi : C_0( (\S_0^4)_\te) \to \sL(X)$ is defined in \eqref{eq:leftvert}. Moreover, letting $D_V \hot 1$ denote the closure of $\sD_V \ot 1$, the inclusion
\[
\dom( D_V \ti_{\Na} D_{Q_0^2}) \cap \Tex{Im}( \pi(x) \hot 1) \su \dom(D_V \hot 1)
\]
can be proved exactly as in \cite[Lemma 21]{KS17b} (using G\aa rding's inequality and the ellipticity of $\sD_V \ti_{\Na} \sD_{Q_0^2}$). In fact, for each $x \in \La$, there exists a constant $C_x > 0$ such that
\begin{equation}\label{eq:gaar}
\| (D_V \hot 1) \ze \| \leq C_x \big( \| \ze \| +  \| (D_V \ti_{\Na} D_{Q_0^2}) \ze \| \big),
\end{equation}
for all $\ze \in \Tex{Im}( \pi(x) \hot 1) \cap \dom( D_V \ti_{\Na} D_{Q_0^2})$. 

To end the proof of the theorem, we only need to show that, for each $x \in \La$, there exists a constant $\ka_x > 0$ such that
\begin{equation}\label{eq:locaposi}
\begin{split}
& \binn{ (D_V \hot 1) (\pi(x) \hot 1)\xi, (D_V \ti_\Na D_{Q_0^2}) (\pi(x) \hot 1)\xi} \\ 
& \q + \binn{ (D_V \ti_\Na D_{Q_0^2}) (\pi(x) \hot 1)\xi, (D_V \hot 1) (\pi(x) \hot 1)\xi} \\
& \qq \geq - \ka_x\inn{(\pi(x) \hot 1)\xi,(\pi(x) \hot 1)\xi},
\end{split}
\end{equation}
for all $\xi \in \cX \ot_{C_c^\infty(Q_0^2)} ( C_c^\infty(Q_0^2) \ot \C^2)$.
%

Let now $x = (f_n \ci q) \ot 1 \in \La$ be fixed and choose the constant $\ka_x > 0$ such that
\[
\frac{1}{4} \Big( \frac{\tan^2 \varphi}{\cos^2 \psi} + 2 \tan^2(\psi) + \frac{\cot^2 \varphi}{\cos^2 \psi} \Big) \leq \ka_x
\]
for all $(\cos \varphi \cos\psi, \sin \varphi \cos \psi,\sin \psi)$ in the support of $f_n : Q_0^2 \to [0,1]$. Recall here that the support of $f_n$ is compact.

We define the symmetric unbounded differential operators
\[
\begin{split}
\sD_V' & = i \frac{1}{ \cos \psi \cos \varphi} \si^1 \frac{\pa}{\pa \te_1} :  \cX \to X \\
\sD_V'' & = i \frac{1}{ \cos \psi \sin \varphi} \si^2 \frac{\pa}{\pa \te_2} :  \cX \to X
\end{split}
\]
and remark that $\sD_V = \sD_V' + \sD_V''$, see \eqref{eq:vertunbo}. Moreover, we define the symmetric unbounded multiplication operators
\[
\begin{split}
\sT' & = \frac{\tan \varphi}{\cos \psi} \cd \si^1 + \tan \psi \cd \si^2 : C_c^\infty(Q_0^2) \ot \C^2 \to L^2(Q_0^2) \ot \C^2 \\
\sT'' & = \frac{\cot \varphi}{\cos \psi} \cd \si^1 - \tan \psi \cd \si^2 : C_c^\infty(Q_0^2) \ot \C^2 \to L^2(Q_0^2) \ot \C^2 .
\end{split}
\]

On the core $\cX \ot_{C_c^\infty(Q_0^2)} ( C_c^\infty(Q_0^2) \ot \C^2)$ for $D_V \ti_{\Na} D_{Q_0^2}$, we compute the anti-commutator:
\[
\begin{split}
& (\sD_V \ot 1) (\ga_X \ot_{\Na} \sD_{Q_0^2})
+ (\ga_X \ot_{\Na} \sD_{Q_0^2}) (\sD_V \ot 1) \\
& \q = (\ga_X \ot 1)\big( [ \Na_{\frac{\pa}{\pa \varphi}} , \sD_V ]  \ot i \si^1 \frac{1}{\cos \psi} 
+ [\Na_{\frac{\pa}{\pa \psi}}, \sD_V] \ot i \si^2\big) \\
& \q = \ga_X \sD_V' \ot i \frac{\tan \varphi}{\cos \psi} \cd \si^1
- \ga_X \sD_V'' \ot i \frac{\cot \varphi}{\cos \psi} \cd \si^1 \\ 
& \qq + \ga_X \sD_V \ot i \tan \psi \cd \si^2 \\
& \q = i \ga_X \sD_V' \ot \sT' - i \ga_X \sD_V'' \ot \sT''.
\end{split}
\]
Still computing on our core for $D_V \ti_{\Na} D_{Q_0^2}$, we obtain the identity
\[
\begin{split}
& \big( \sD_V' \ot 1 + i \frac{1}{2} \ga_X \ot \sT' \big)
\big( \sD_V' \ot 1 - i \frac{1}{2} \ga_X \ot \sT' \big) \\
& \qq + \big( \sD_V'' \ot 1 - i \frac{1}{2} \ga_X \ot \sT'' \big)
\big( \sD_V'' \ot 1 + i \frac{1}{2} \ga_X \ot \sT'' \big) \\
& \q = \sD_V^2 \ot 1 + \frac{1}{4} \ot \big( (\sT')^2 + (\sT'')^2 \big) \\
& \qq + (\sD_V \ot 1) (\ga_X \ot_{\Na} \sD_{Q_0^2})
+ (\ga_X \ot_{\Na} \sD_{Q_0^2}) (\sD_V \ot 1) .
\end{split}
\]

Recall now that $x = (f_n \ci q) \ot 1$ and remark that
\[
\frac{1}{4} \cd \big( (\sT')^2 + (\sT'')^2 \big) 
= \frac{1}{4} \cd \big( \frac{\tan^2 \varphi}{\cos^2 \psi} + 2 \tan^2(\psi) + \frac{\cot^2 \varphi}{\cos^2 \psi} \big).
\]
Hence, by our choice of $\ka_x$, we have the inequality
\[
\frac{1}{4} \cd \big( (\sT')^2 + (\sT'')^2 \big) \leq \ka_x
\]
on the support of $f_n : Q_0^2 \to [0,1]$. 

Let now $\xi \in \cX \ot_{C_c^\infty(Q_0^2)}( C_c^\infty(Q_0^2) \ot \C^2)$ be given and put
\[
\ze := (\pi(x) \hot 1)(\xi).
\]
We thus infer from the above computations that
\[
\begin{split}
& \binn{ (\sD_V \hot 1) \ze, (\sD_V \ti_\Na \sD_{Q_0^2}) \ze} + \binn{ (\sD_V \ti_\Na \sD_{Q_0^2}) \ze, (\sD_V \hot 1) \ze} \\
& \q = \binn{ (\sD_V \ot 1) \ze, (\sD_V \ot 1) \ze} \\ 
& \qq + \binn{\big( \sD_V' \ot 1 - i \frac{1}{2} \ga_X \ot \sT' \big) \ze, \big( \sD_V' \ot 1 - i \frac{1}{2} \ga_X \ot \sT' \big) \ze } \\
& \qq + \binn{ \big( \sD_V'' \ot 1 + i \frac{1}{2} \ga_X \ot \sT'' \big) \ze , 
\big( \sD_V'' \ot 1 + i \frac{1}{2} \ga_X \ot \sT'' \big) \ze } \\
& \qq - \binn{ \ze, \big(1 \ot \frac{1}{4}( (\sT')^2 + (\sT'')^2) \big)\ze } \\
& \q \geq - \ka_x \inn{\ze,\ze},
\end{split}
\]
where the last inequality follows since $\ze \in \Tex{Im}(\pi(x) \hot 1)$. This proves the local positivity condition and hence the result of the theorem. \endproof

\begin{rem}
We remark that the 4-sphere gives an example of an almost-regular fibration since $\pi : \S^4_0 \to Q_0^2$ is a proper Riemannian submersion of spin$^c$ manifolds. The above Theorem \ref{thm:fact} would therefore also follow from the general techniques on factorization of Dirac operators on almost-regular fibrations developed in \cite{KS17b}. Indeed, we shall see in the next section how these techniques can be transferred to the case of theta-deformations of general toric spin$^c$ manifolds. The reader can in this respect compare with \cite[Proposition 29]{FR15b}. 

The case of the 4-sphere is however much more explicit and arguably one of the most interesting, we therefore found it worthwhile to spell out the details.  
%
\end{rem}

\section{Dirac operators on toric noncommutative manifolds}
We end this paper by showing how one can generalize the above results on the 4-sphere to factorize Dirac operators on any toric noncommutative spin$^c$ manifold subject to a condition on the principal stratum. This will be based on our general factorization results of Dirac operators on almost-regular fibrations \cite{KS16,KS17b}. 

We consider a compact Riemannian spin$^c$ manifold $M$ with an isometric action of the n-torus $\T^n$ and denote the corresponding strongly continuous action on the $C^*$-algebra of continuous functions by $\alpha: \T^n \ti C(M) \to C(M)$.

Let us denote the spinor bundle on $M$ by $E_M \to M$, the Clifford bundle by $\Cl(T M) \to M$ and a fixed hermitian Clifford connection by
\[
\Na^{\sE_M} : \Ga^\infty(M,E_M) \to \Ga^\infty(M,E_M \ot T^* M).
\]
We apply the notation $c_M : \Cl(T M) \to \End( E_M)$ for the Clifford action whereas the module of smooth sections of the spinor bundle and the Hilbert space of $L^2$-spinors are denoted by
\[
\sE_M = \Ga^\infty(M,E_M) \q \Tex{and} \q L^2(\sE_M) = L^2(M,E_M),
\]
respectively. When $M$ is even-dimensional we denote the $\zz/2\zz$-grading operator by $\ga_M : E_M \to E_M$ and remark that this grading operator induces a $\zz/2\zz$-grading operator $\ga_M : L^2(\sE_M) \to L^2(\sE_M)$.
\medskip

We assume that the action $\al : \T^n \ti M \to M$ lifts to an action $\ov{\al} : \T^n \ti E_M \to E_M$ inducing unitary bundle maps
\[
\ov{\al}_t : E_M \to \al_t^* E_M \q t \in \T^n
\]
such that
\begin{itemize}
\item $\ov{\al}_t \ci c_M(X) = c_M( d\al_t(X)) \ci \ov{\al}_t$ and
\item $\ov{\al}_t \ci \Na^{\sE_M}_X = (\al_t^* \Na^{\sE_M})_X \ci \ov{\al}_t$,
\end{itemize}
for all smooth vector fields $X : M \to TM$ and all $t \in \T^n$. Remark here that 
\[
\al_t^* \Na^{\sE_M} : \Ga^\infty(M,\al_t^* E_M) \to \Ga^\infty(M, \al_t^* E_M \ot T^* M)
\]
denotes the pullback connection. In the even dimensional case, each $\ov{\al}_t$ is assumed to be even with respect to the grading on $E_M$. When these properties are satisfied we say that the action $\al : \T^n \ti M \to M$ \emph{admits a spin$^c$ lift} or that the \emph{spin$^c$ structure on $M$ is $\T^n$-equivariant}.
\medskip

Let us fix a skew-symmetric matrix $\te \in M_n(\rr)$.

For each $k \in \zz^n$ we define the spectral subspace
\[
C^\infty(M)_k = \big\{ f \in C^\infty(M) \mid \al_t(f) = t^k \cd f \, , \, \, \Tex{for all } t \in \T^n \big\}.
\]

\begin{defn}\label{defn:toric-def}
The coordinate algebra for the theta-deformed manifold is the unital $*$-subalgebra
\[
\C[M_\te] = \Tex{span}_{\C}\big\{ f \ot U^k \mid k \in \zz^n \, , \, \, f \in C^\infty(M)_k \big\} \su C^\infty(M) \ot \C[\T_\te^n].
\]
The unital $C^*$-algebra $C(M_\theta)$ is defined as the completion of $\C[M_\te]$ with respect to the supremum-norm
\[
\| x \|_{C(M_\te)} = \sup_{p \in M}\| x(p) \|_{C(\T_\te^n)} \q x \in \C[M_\te].
\]
The smooth functions on the theta-deformed manifold $M_\te$ is the unital Fr\'echet $*$-algebra $C^\infty(M_\te)$ obtained as the closure of the coordinate algebra $\C[M_\te]$ inside the projective tensor product of Fr\'echet $*$-algebras $C^\infty(M) \hot C^\infty(\T^n_\te)$.
%
\end{defn}

The lift $\ov{\al} : \T^n \ti E_M \to E_M$ induces a unitary representation of $\T^n$ on the Hilbert space of $L^2$-spinors:
\[
\wit \al : \T^n \to \U( L^2(\sE_M)) \q \wit \al_t(\xi) := \ov \al_t \ci \xi \ci \al_t^{-1}
\]
and for each $k \in \zz^n$, we thus have the spectral subspace
\[
L^2(\sE_M)_k = \big\{ \xi \in L^2(\sE_M) \mid \wit \al_t(\xi) = t^k \cd \xi \, , \, \, \Tex{for all } t \in \T^n \big\}. 
\]
In particular, we may define the isometry
\[
\begin{split}
& V : L^2(\sE_M) \to L^2(\sE_M) \hot L^2(\T^n_\te) \\
& V( \sum_{k \in \zz^n} \xi_k) = \sum_{k \in \zz^n} \xi_k \ot U^k.
\end{split}
\]
The left action of $C(M_\te)$ on $L^2(\sE_M)$ is then given by the representation
\[
\pi : C(M_\te) \to L^2(\sE_M) \q \pi(x) = V^* x V ,
\]
where we are suppressing the action of $C(M_\te)$ on the Hilbert space tensor product $L^2(\sE_M) \hot L^2(\T^n_\te)$ coming from the tensor product of the action of $C(M)$ on $L^2(\sE_M)$ and the action of $C(\T^n_\te)$ on $L^2(\T^n_\te)$.
\medskip

We denote the Dirac operator by $D_M : \dom(D_M) \to L^2(\sE_M)$ and recall that $D_M$ is the closure of the first order differential operator $\sD_M : \sE_M \to L^2(\sE_M)$ defined locally by the formula 
\[
\sD_M = i \sum_{j = 1}^{\dim(M)} c_M( e_j) \Na^{\sE_M}_{e_j} , 
\]
for any local orthonormal frame $\{e_j\}$ of the real tangent bundle. We remark that the Dirac operator $D_M$ satisfies the equivariance condition
\[
D_M \wit{\al}_t = \wit{\al}_t D_M \q \Tex{for all } t \in \T^n.
\]

We record the following result from \cite{CL01, CD02}:

%

\begin{thm}[Connes--Landi, Connes--Dubois-Violette]
Let $M$ be a compact Riemannian spin$^c$ manifold equipped with an isometric torus action admitting a spin$^c$ lift. Then the triple
\[
(C^\infty(M_\theta),L^2(\sE_M),D_M)
\]
is a spectral triple for the $C^*$-algebra $C(M_\theta)$ of the same parity as the dimension of $M$ and with grading operator $\ga_M : L^2(\sE_M) \to L^2(\sE_M)$ in the even dimensional case.
\end{thm}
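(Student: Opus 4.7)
The plan is to generalize verbatim the argument used earlier for the Connes--Landi 4-sphere. Since $M$ is a compact Riemannian spin$^c$ manifold, the Dirac operator $D_M$ is classically essentially self-adjoint on $\sE_M$ and has compact resolvent, with summability governed by $\dim(M)$. Consequently the only axioms of a spectral triple still to be verified are that each $x \in C^\infty(M_\theta)$ preserves $\dom(D_M)$ under $\pi$ and that the commutator $[D_M,\pi(x)]$ extends to a bounded operator on $L^2(\sE_M)$.

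The device is again the equivariant isometry $V : L^2(\sE_M) \to L^2(\sE_M) \hot L^2(\T^n_\theta)$ induced by the spectral decomposition under $\wit\alpha$. I would introduce $D_M \hot 1$ as the closure of the symmetric operator $D_M \ot 1$ defined on $\dom(D_M) \ot L^2(\T^n_\theta)$. The equivariance condition $D_M \wit\alpha_t = \wit\alpha_t D_M$ then yields, by decomposing into spectral subspaces, the intertwining identities
\[
(D_M \hot 1) V = V D_M \qquad \text{and} \qquad D_M V^* = V^* (D_M \hot 1)
\]
on the respective domains.

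Next I would show that any $y \in C^\infty(M) \hot C^\infty(\T^n_\theta)$, acting on $L^2(\sE_M) \hot L^2(\T^n_\theta)$ through the tensor product of the two representations, preserves $\dom(D_M \hot 1)$ and has bounded commutator with $D_M \hot 1$. For an elementary tensor $y = f \ot a$ with $f \in C^\infty(M)$ and $a \in C^\infty(\T^n_\theta)$ the commutator reduces to $[D_M,f] \ot a$; since $[D_M, f]$ is Clifford multiplication by $df$ with operator norm bounded by $\|df\|_\infty$, the estimate $\|[D_M \hot 1, f \ot a]\| \leq \|df\|_\infty \cd \|a\|$ is immediate. Because this bound is jointly continuous in a first-order Fréchet seminorm on $C^\infty(M)$ and in the $C^*$-norm on $C^\infty(\T^n_\theta)$, it extends from the algebraic tensor product to the projective Fréchet completion $C^\infty(M) \hot C^\infty(\T^n_\theta)$, and in particular to the subalgebra $C^\infty(M_\theta)$.

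Finally, conjugating by $V$ produces
\[
[D_M, \pi(x)] = [D_M, V^* x V] = V^* [D_M \hot 1, x] V
\]
as operators on $\dom(D_M)$, which is thus bounded. In even dimensions, evenness of the spin$^c$ lift ensures that $V$ is even while $D_M$ anticommutes with $\ga_M$ in the usual way, so the spectral triple is graded. The main technical obstacle I anticipate is the Fréchet-continuity extension of the commutator bound: one needs to identify precisely which seminorms from the definition of $C^\infty(M) \hot C^\infty(\T^n_\theta)$ dominate the operator norm, so that the estimate passes from the algebraic tensor product to its projective completion. Once this is settled, every remaining verification is a direct transcription of the calculation already carried out for $\S^4_\theta$.
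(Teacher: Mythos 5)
Your argument is correct and is essentially the same as the one the paper gives for the analogous Connes--Landi proposition on $\S^4_\theta$ (the general theorem itself is only quoted from \cite{CL01,CD02}): reduce via the equivariant isometry $V$ and the intertwining identities to boundedness of $[D_M \hot 1, x]$ on the tensor product, check it on elementary tensors, and extend by continuity of the projective tensor product seminorms. The Fr\'echet-continuity step you flag is indeed the only point requiring care, and your estimate $\|[D_M\hot 1, f\ot a]\|\leq \|df\|_\infty\cdot\|a\|$ together with closedness of $D_M\hot 1$ settles both the commutator bound and domain preservation.
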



We aim for a factorization of the theta-deformed spectral triple $(C^\infty(M_\theta),L^2(\sE_M),D_M)$ in terms of vertical and horizontal unbounded cycles, moreover this factorization result will be an unbounded analogue of the interior Kasparov product in bivariant K-theory. We will make heavy use of the results of \cite{KS17a} and in particular of Examples 24 and 27 therein. We restrict ourselves to the case where both the compact spin$^c$ manifold $M$ and the torus $\T^n$ are \emph{even} dimensional, but note that the remaining three cases can be treated by a similar argument.
\medskip

Our assumptions are as follows:

\begin{ass}\label{a:almreg}
We assume that our isometric action $\al : \T^{2m} \ti M \to M$ on the even dimensional compact spin$^c$ manifold $M$ is effective and admits a spin$^c$ lift. Letting $M_0 \subset M$ denote the principal stratum for the action of $\T^{2m}$ on $M$ we moreover assume that
\begin{itemize}
\item There exists a finite number $P_1,P_2,\ldots,P_l \subset M$ of compact embedded submanifolds, each without boundary and of codimension strictly greater than 1, such that
\[
M \sem M_0 = \cup_{j = 1}^l P_j.
\]
\end{itemize}
In particular, the quadruple $(M, M \sem M_0, M_0 / \T^{2m},q)$ is an almost-regular fibration of spin$^c$ manifolds in the sense of \cite[Definition 26]{KS17b}.
\end{ass}

The compact embedded submanifolds $P_j \su M$ are typically connected components of $H$-fixed points for some isotropy group $H \subset \T^{2m}$. Remark however that condition $(2)$ in the above assumption is \emph{not automatic} since we could have isotropy groups $H \neq \{0\}$ with $\T^{2m}/H$ of the same dimension as $\T^{2m}$.  
\medskip

The open dense submanifold $M_0 \subset M$ inherits a $\T^{2m}$-equivariant spin$^c$ structure from $M$ and we indicate with an extra zero subscript that spinor bundles, Clifford actions and connections are restricted to $M_0$.

Since the spin$^c$ structure on $M_0$ is equivariant we may also provide the quotient manifold 
\[
B = M_0/\T^{2m}
\]
with a spin$^c$ structure in such a way that the quotient map $q : M_0 \to M_0/\T^{2m}$ is a Riemannian submersion.

We denote the $\zz/2\zz$-graded spinor bundle by $E_B \to B$, the Clifford action by
\[
c_B : \Cl(TB) \to \End(E_B)
\]
and a fixed hermitian Clifford connection by
\[
\Na^{\sE_B} : \Ga^\infty_c(B,E_B) \to \Ga^\infty_c(B,E_B \ot T^* B).
\]
The associated Dirac operator is denoted by
\[
\sD_B : \Ga^\infty_c(B,E_B) \to L^2(B,E_B)
\]
and the closure by $D_B : \dom(D_B) \to L^2(B,E_B)$. We remark that $D_B$ need not be selfadjoint (since the principal orbit space in general fails to be complete). The triple $(C_c^\infty(B),L^2(B,E_B),D_B)$ is however still an even half-closed chain, representing the fundamental class of $B$ in $KK_0(C_0(B),\C)$. The grading operator $\ga_B : L^2(B,E_B) \to L^2(B,E_B)$ is induced by the grading operator on $E_B$. The even half-closed chain $(C_c^\infty(B),L^2(B,E_B),D_B)$ describes the horizontal part of our geometry.
%
%
\medskip

The $\theta$-deformation of $C_0(M_0)$ is the non-unital $C^*$-algebra $C_0((M_0)_\te)$ obtained as the $C^*$-norm closure of the $*$-subalgebra
\[
\C_c[ (M_0)_\te] = \Tex{span}_{\C} \big\{ f \ot U^k \mid k \in \zz^{2m} \, , \, \, f \in C_c^\infty(M_0)_k \big\} \subset C(M_\te), 
\]
where the spectral subspaces $C_c^\infty(M_0)_k$, $k \in \zz^{2m}$, are defined using the torus action on $M_0$. We also have the non-unital Fr\'echet $*$-algebra of theta-deformed smooth functions vanishing at infinity on the principal stratum. This Fr\'echet $*$-algebra is denoted by $C_0^\infty( (M_0)_\te)$ and is obtained as the closure of $\C_c[ (M_0)_\te]$ inside $C^\infty(M_\te)$.
\medskip

For the vertical part of our geometry, one proceeds mainly as in \cite{KS16,KS17b}, but we keep track of torus-actions and replace left actions by their theta-deformed analogues. 

We define a $\zz/2\zz$-graded smooth hermitian vector bundle of vertical spinors 
\[
E_V = \Hom( q^* E_B, M_0 \ti \C) \ot_{\Cl(T_H M_0)} E_{M_0} \to M_0,
\]
where $\Cl(T_H M_0) \to M_0$ denotes the Clifford bundle generated by the horizontal tangent vectors, thus vectors in the fiber of the horizontal tangent bundle 
\[
T_H M_0 := \Tex{Ker}(d q)^\perp = (T_V M_0)^\perp.
\]
Remark that the grading operator on the bundle $E_V \to M_0$ is given by 
\[
\ga_V := (q^* \ga_B)^\da \ot \ga_{M_0}
\]
and that the hermitian form is explained in \cite[Definition 7]{KS16}, (the $\da$ in $(q^* \ga_B)^\da$ refers to the operation on the dual bundle given by precomposition). Moreover, $E_V \to M_0$ carries a Clifford action
\[
c_V : \Cl(T_V M_0) \to \End( E_V) \q c_V( \xi) = (q^* \ga_B)^\da \ot c_{M_0}(\xi)
\]
by vertical tangent vectors and admits an even hermitian Clifford connection
\[
\Na^{\sE_V} : \Ga^\infty_c(M_0,E_V) \to \Ga^\infty_c(M_0,E_V \ot T^* M_0),
\]
given by the explicit formula in \cite[Proposition 11]{KS16}. The manifold $E_V$ also carries a smooth action of $\T^{2m}$,
\[
\ov{\al}_V : \T^{2m} \ti E_V \to E_V \q (\ov{\al}_V)_t := 1 \ot \ov{\al}_t 
\]
and it can be verified that this action is compatible with the above data in the sense that
\begin{itemize}
\item $\ov{\al}_V$ induces even unitary bundle maps $(\ov{\al}_V)_t : E_V \to \al_t^*(E_V)$;
\item $(\ov{\al}_V)_t \ci c_V(Z) = c_V( d\al_t(Z)) \ci (\ov{\al}_V)_t$ and
\item $(\ov{\al}_V)_t \ci \Na^{\sE_V}_X = (\al_t^* \Na^{\sE_V})_X \ci (\ov{\al}_V)_t$,
\end{itemize}
for all $t \in \T^{2m}$ and all smooth vector fields $X,Z : M_0 \to T M_0$ with $Z$ vertical. Remark in this respect that $d \al_t(Y_H) = Y_H \ci \al_t$ whenever $Y_H = (dq)^*( Y \ci q)$ is a horizontal lift of a vector field $Y$ on $B$ (where the $*$ in $(dq)^*$ refers to the adjoint operation).

We denote the right $C_c^\infty(B)$-module of smooth compactly supported sections of $E_V$ by
\[
\sE_V^c = \Ga^\infty_c(M_0,E_V),
\]
where the right action is defined via the pullback $q^* : C^\infty_c(B) \to C_c^\infty(M_0)$. Using integration along the orbits of $\T^{2m}$, we may define a $C_0(B)$-valued inner product $\inn{\cd,\cd}_X$ on $\sE_V^c$ and the completion is a $\zz/2\zz$-graded Hilbert $C^*$-module $X$ over $C_0(B)$. The grading operator 
\[
\ga_X = \ga_V : X \to X  
\]
is induced by the grading operator on the bundle $E_V$. For each $t \in \T^{2m}$ we define the \emph{vertical spin$^c$ lift}
\[
(\wit{\al}_V)_t : \sE_V^c \to \sE_V^c \q (\wit{\al}_V)_t(\xi) = (\ov{\al}_V)_t \ci \xi \ci \al_t^{-1}
\]
and notice that $\wit{\al}_V$ extends to a strictly continuous even unitary representation
\[
\wit{\al}_V : \T^{2m} \to \U(X)
\]
on the Hilbert $C^*$-module completion $X$.

For each $k \in \zz^{2m}$, we thus have the spectral submodule
\[
X_k = \big\{ \xi \in X \mid \wit{\al}_t(\xi) = t^k \cd \xi \big\} \subset X
\]
and the associated even bounded adjointable isometry
\[
V : X \to X \hot L^2(\T^{2m}_\te) \q V( \sum_{k \in \zz^{2m}} \xi_k) = \sum_{k \in \zz^{2m}} \xi_k \ot U^k.
\]
We may thus turn $X$ into a $\zz/2\zz$-graded $C^*$-correspondence from $C_0\big( (M_0)_\te \big)$ to $C_0(B)$ by defining the left action via the $*$-homomorphism
\[
\pi : C_0\big( (M_0)_\te\big) \to \sL( X ) \q \pi(x) = V^* x V,
\]
where we are suppressing the action of $C_0\big( (M_0)_\te \big) \subset C_0(M_0) \hot C(\T^{2m}_\te)$ on the Hilbert $C^*$-module $X \hot L^2(\T^{2m}_\te)$ coming from the tensor product of the action of $C_0(M_0)$ on $X$ (via pointwise multiplication) and the action of $C(\T^{2m}_\te)$ on $L^2(\T^{2m}_\te)$ discussed in Subsection \ref{ss:nc-torus}.

The vertical Dirac operator $D_V : \dom(D_V) \to X$ is then an unbounded operator on the Hilbert $C^*$-module $X$. This unbounded operator is defined as the closure of the first order differential operator $\sD_V : \sE_V^c \to X$ given locally by the expression
\[
\sD_V = i \sum_{j = 1}^{2m} c_V(e_j) \Na^{\sE_V}_{e_j}
\]
for any local orthonormal frame $\{e_j\}$ of the real vertical tangent bundle. We record that the vertical Dirac operator is equivariant in the sense that
\[
D_V (\wit{\al}_V)_t = (\wit{\al}_V)_t D_V \q \Tex{for all } t\in \T^{2m}.
\]

The vertical part of our theta-deformed geometric data can then be summarized in the following:

\begin{prop}\label{p:vertthet}
The triple $(C_0^\infty( (M_0)_\te), X,D_V)$ is an even unbounded Kasparov module from $C_0( (M_0)_\te)$ to $C_0(B)$.
\end{prop}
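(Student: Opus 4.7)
The plan is to mirror the proof of Proposition~\ref{p:vertsphe}, but since no explicit eigenvector decomposition of $D_V$ is available in the general setting, the selfadjointness, regularity, and local compactness of $D_V$ will be imported from the general theorem on closures of vertically elliptic symmetric first-order differential operators on almost-regular fibrations, namely \cite[Theorem~3]{KS17b}. Assumption~\ref{a:almreg} is tailored precisely so that this theorem applies to the quadruple $(M, M \sem M_0, B, q)$. Three things require verification: the bounded commutator condition for $x \in C_0^\infty((M_0)_\te)$, selfadjointness and regularity of $D_V$ on $X$, and local compactness $\pi(x)(i + D_V)^{-1} \in \K(X)$ for $x \in C_0((M_0)_\te)$.

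For the commutator condition, the plan is to lift $D_V$ to the closure $D_V \hot 1$ of the symmetric operator $\sD_V \ot 1$ on $X \hot L^2(\T^{2m}_\te)$. Equivariance of $D_V$ under $\wit{\al}_V$ yields the identities
\[
(D_V \hot 1) V = V D_V \q \Tex{and} \q V^* (D_V \hot 1) = D_V V^*
\]
on the appropriate domains. Since $x$ lies in $C^\infty(M_\te) \subset C^\infty(M) \hot C^\infty(\T^{2m}_\te)$ and since $\sD_V \ot 1$ is a first-order differential operator differentiating only in the vertical directions on $M_0$, the element $x$ will preserve $\dom(D_V \hot 1)$ and admit a bounded commutator with $D_V \hot 1$. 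The identity $[D_V, \pi(x)] = V^*[D_V \hot 1, x] V$ on $\dom(D_V)$ will then establish the bounded commutator statement for $D_V$.

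For the remaining analytic conditions, \cite[Theorem~3]{KS17b} applied to the almost-regular fibration $(M, M \sem M_0, B, q)$ will yield that $D_V$ is selfadjoint and regular on $X$ and that $f \cd (i + D_V)^{-1} \in \K(X)$ for every $f \in C_0(M_0)$. To upgrade the local compactness statement to the theta-deformed setting, it suffices by density to treat $x = f \ot U^k$ with $f \in C_0^\infty(M_0)_k$. Decomposing $X = \bop_{l \in \zz^{2m}} X_l$, using the equivariance of $(i + D_V)^{-1}$ (so that it preserves each $X_l$), and computing $V^* x V (i + D_V)^{-1}$ directly, I expect an identity of the form
\[
\pi(f \ot U^k) (i + D_V)^{-1} = f \cd (i + D_V)^{-1} \cd R_k,
\]
where $R_k : X \to X$ is a bounded adjointable unitary acting by a phase on each spectral subspace $X_l$ coming from the twisted product $U^k U^l$ in $\C[\T^{2m}_\te]$. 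Since $f \cd (i + D_V)^{-1}$ is compact and $R_k$ is bounded adjointable, the composition will lie in $\K(X)$.

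The main subtlety will be this last step: one must carefully track the phase factors coming from the noncommutative product in $\C[\T^{2m}_\te]$ and match them against the spectral decomposition of $X$ to arrive cleanly at the factorization through $R_k$. All remaining bookkeeping (evenness of the triple, compatibility with the grading operator $\ga_X = \ga_V$, and strict continuity of the representation $\wit{\al}_V$) will then parallel the arguments already employed for the Connes--Landi sphere.
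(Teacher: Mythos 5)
Your proposal is correct and follows the paper's overall strategy: the bounded-commutator condition is handled exactly as in the sphere case (via the isometry $V$ and equivariance of $D_V$), and selfadjointness, regularity and compactness of $m(f)(i+D_V)^{-1}$ for $f \in C_0(M_0)$ are imported from the general results of \cite{KS17b} on vertically elliptic first-order operators over almost-regular fibrations (the paper invokes \cite[Proposition 13]{KS17b} at this point). The one step where you take a genuinely different route is the upgrade of local compactness to the deformed algebra. The paper's argument is a reduction to the undeformed case: for $x = f\otimes U^k$ with $f \in C_c^\infty(M_0)_k$ one chooses $g \in C_c(B)$ equal to $1$ on $q(\supp f)$, writes $x = x\cdot((g\circ q)\otimes 1)$, and notes that $\pi((g\circ q)\otimes 1) = m(g\circ q)$ is an ordinary multiplication operator, so $\pi(x)(i+D_V)^{-1} = \pi(x)\, m(g\circ q)\,(i+D_V)^{-1}$ is bounded adjointable times compact. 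Your route---computing $\pi(f\otimes U^k)$ on the decomposition $X = \bigoplus_{l} X_l$ and exhibiting it as $m(f)\,R_k$ with $R_k$ a diagonal phase unitary---also works: by equivariance $R_k$ commutes with $(i+D_V)^{-1}$, so $\pi(f\otimes U^k)(i+D_V)^{-1} = m(f)(i+D_V)^{-1}R_k$ lies in $\K(X)$. The phase bookkeeping you flag as the main subtlety is in fact harmless, since only unimodularity of the phases (hence unitarity and adjointability of $R_k$) is needed, not their precise values; the paper's factorization through a function pulled back from the base buys the same conclusion without any computation on spectral subspaces.
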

\begin{proof}
It follows from \cite[Proposition 13]{KS17b} that $D_V : \dom(D_V) \to X$ is an odd selfadjoint and regular unbounded operator and that $m(f) \cd (i + D_V)^{-1} : X \to X$ is a compact operator for all $f \in C_0(M_0)$, where the bounded adjointable operator $m(f) : X \to X$ is induced by the pointwise multiplication with the function $f$. 

To show that $\pi(x) \cd (i + D_V)^{-1} : X \to X$ is a compact operator for all $x \in C_0\big( (M_0)_\te\big)$, we may restrict to the case where $x = f \ot U^k$ for some $k \in \zz^{2m}$ and some compactly supported $f \in C_c^\infty(M_0)_k$ in the $k^{\Tex{th}}$ spectral subspace. In fact, since we may find $g \in C_0(B)$ with $f \cd (g \ci q)$, we may even assume that $k = (0,0,\ldots,0)$. But in this case, we have that $\pi(x) = m(f)$ and we are done.

Let now $x \in C_0^\infty( (M_0)_\te)$. The fact that $\pi(x)$ preserves the domain of $D_V$ and that the commutator $[D_V,\pi(x)] : \dom(D_V) \to X$ has a bounded extension to $X$ follows exactly as in the proof of Proposition \ref{p:vertsphe}, using the $\T^{2m}$-equivariance of the vertical Dirac operator $D_V$.
\end{proof}


In order to lift the Dirac operator on the base $D_B : \dom(D_B) \to L^2(B,E_B)$ to the interior tensor product $X \hot_{C_0(B)} L^2(B,E_B)$, we need an even equivariant metric connection for the $C^\infty_c(B)$-valued inner product $\langle\cdot,\cdot \rangle_X$ on $\sE^c_V$. To this end, we modify the hermitian Clifford connection on $E_V$ by the mean curvature
\[
k : T_H M_0 \to M_0 \ti \C,
\]
which is defined as the trace of the second fundamental form, see \cite{KS16,BGV92} for the details. Our even metric connection is then given by
\[
\nabla^X_Z(\xi) = \nabla^{\sE_V}_{Z_H}(\xi) + \frac{1}{2} k(Z_H) \cdot \xi,
\]
for any smooth vector field $Z$ on $B$, with horizontal lift $Z_H : M_0 \to T_H M_0$ and any $\xi \in \sE_V^c \subseteq X$, see \cite[Definition 18]{KS16}. Since our smooth action $\al : \T^{2m} \ti M_0 \to M_0$ is isometric, it can be verified that the mean curvature $k(Z_H)$ is invariant under this action and hence defines an element in $C^\infty(B)$ for every horizontal lift $Z_H$. Moreover, it follows from the properties of the lift $\ov{\al}_V$ that $\Na^{\sE_V}_{Z_H} (\wit{\al}_V)_t = (\wit{\al}_V)_t \Na^{\sE_V}_{Z_H}$ for every $t \in \T^{2m}$ and every horizontal lift $Z_H$. We thus conclude that our metric connection is equivariant with respect to the vertical spin$^c$ lift:
\[
\Na^X_Z (\wit{\al}_V)_t = (\wit{\al}_V)_t \Na^X_Z : \sE_V^c \to X \q t \in \T^{2m} \, , \, \, Z \in \Ga^\infty(B,TB).
\]

%
%
%

The tensor sum we are after is given by the symmetric unbounded operator 
\[
\begin{split}
\sD_V \ti_{\Na} \sD_B & = \sD_V \otimes 1  + (\ga_X \ot 1) (1 \otimes_\nabla \sD_B)  \\ 
& \qq : \sE_V^c \otimes_{C^\infty_c(B)} \Ga^\infty_c(B,E_B) \to X \hot_{C_0(B)} L^2(B,E_B) \, ,
\end{split}
\]
The closure of the symmetric unbounded operator $\sD_V \ti_{\Na} \sD_B$ will be denoted by $D_V \times_\nabla D_B$.

The vertical spin$^c$ lift $\wit \al_V$ induces a unitary representation
\[
\wit \al_V \ot 1 : \T^{2m} \to \U( X \hot_{C_0(B)} L^2(B,E_B))
\]
and since both $D_V$ and the even metric connection $\Na^X$ are equivariant for the vertical spin$^c$ lift, we conclude that our tensor sum is equivariant as well
\[
(D_V \ti_{\Na} D_B)\big(  (\wit{\al}_V)_t \ot 1 \big) = \big(  (\wit{\al}_V)_t \ot 1 \big) (D_V \ti_{\Na} D_B) \q t \in \T^{2m}.
\]

\begin{thm}\label{thm:tensor-sum-toric}
Up to unitary equivalence of $C^*$-correspondences (from $C_0( (M_0)_\te)$ to $\C$), we have the equality of selfadjoint operators 
\[
D_V \ti_{\Na} D_B = \overline {\sD_{M_0} - \frac{i}{8} c_{M_0}(\Omega)},
\]
where $\sD_{M_0} : \Ga^\infty_c(M_0,E_{M_0}) \to L^2(M_0,E_{M_0})$ is the Dirac operator and $c_{M_0}(\Omega) : \Ga^\infty_c(M_0, E_{M_0}) \to L^2(M_0, E_{M_0})$ denotes Clifford multiplication ({\it cf.} \cite[Section 3.3]{KS17a} for the precise formula) by the curvature 2-form $\Omega : \La^2(T_H M_0) \ot T_V M_0 \to M_0 \ti \C$ defined by
\[
\Om(X,Y,Z) = \inn{ [X,Y],Z}_{M_0},
\]
for all real horizontal vector fields $X,Y$ and every real vertical vector field $Z$.
\end{thm}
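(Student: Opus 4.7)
The plan is to transfer the argument of Theorem \ref{thm:tensor-sum} (the Connes--Landi case) to the present setting, picking up the curvature correction that was previously absent because the horizontal vector fields on $\S^4_0$ closed under the Lie bracket. The proof reduces, via the equivariant embedding $V$ into the tensor product with $L^2(\T^{2m}_\te)$, to a computation on the classical principal stratum $M_0$, which in turn is the factorization formula for Dirac operators on almost-regular fibrations of spin$^c$ manifolds established in \cite{KS16,KS17b,KS17a}.

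\medskip

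\noindent\textbf{Step 1 (Clifford bundle factorization).} Fiberwise, $E_{M_0}$ is an irreducible $\Z/2\Z$-graded Clifford module for $\Cl(TM_0) = \Cl(T_H M_0)\hot \Cl(T_V M_0)$, and unwinding the definition of $E_V$ together with the isomorphism $q^*E_B \simeq E_B|_{T_H M_0}$ yields a canonical even unitary isomorphism of Clifford bundles
\[
q^* E_B \ot_{\Cl(T_H M_0)} E_V \isom E_{M_0}.
\]
Combined with the fact that integration along the $\T^{2m}$-orbits of the fiberwise inner product on $E_V$ recovers the $L^2$-inner product on $E_{M_0}$ (relative to the Riemannian submersion $q$), this provides an even unitary isomorphism
\[
W : X \hot_{C_0(B)} L^2(B,E_B) \to L^2(M_0,E_{M_0})
\]
of $\Z/2\Z$-graded $C^*$-correspondences from $C_0((M_0)_\te)$ to $\C$, where we use the embedding $V$ of Proposition \ref{p:vertthet} to identify the Connes--Landi left action of $C_0((M_0)_\te)$ with the restricted tensor-product action on $L^2(M_0,E_{M_0}) \hot L^2(\T^{2m}_\te)$. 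The auxiliary selfadjoint grading unitary $\Ga$ from the sphere discussion must be inserted to align $\ga_V\ot 1$ on the source with $\ga_{M_0}$ on the target, exactly as in Theorem \ref{thm:tensor-sum}.

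\medskip

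\noindent\textbf{Step 2 (local computation on a core).} Since $\sE_V^c \ot_{C_c^\infty(B)} \Ga^\infty_c(B,E_B)$ is a core for $D_V\ti_{\Na} D_B$ which $W\Ga$ identifies with $\Ga^\infty_c(M_0,E_{M_0})$ (a core for $\sD_{M_0}$ by Assumption \ref{a:almreg} and \cite[Proposition 30]{KS17b}), it suffices to verify the identity on this core. Choosing a local orthonormal frame $\{e_j^V\}$ of $T_V M_0$ and horizontally lifting a local orthonormal frame $\{f_\alpha\}$ of $TB$ to $\{(f_\alpha)_H\}$, one computes
\[
W\Ga(\sD_V\ot 1)\Ga W^* = i\sum_{j} c_{M_0}(e_j^V)\Na^{\sE_{M_0}}_{e_j^V},
\]
while for the horizontal piece one has to unfold $(1\ot_\Na\sD_B)$ through the definition $\Na^X = \Na^{\sE_V}_{Z_H}+\tfrac 12 k(Z_H)$. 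The mean-curvature correction $\tfrac12 k$ is designed to convert $\Na^{\sE_V}_{Z_H}$ into the Clifford-connection covariant derivative relative to the submersion; gathering the terms and invoking \cite[Proposition 11]{KS16}, one obtains
\[
W\Ga(1\ot_\Na\sD_B)\Ga W^* = i\sum_{\alpha} c_{M_0}((f_\alpha)_H)\Na^{\sE_{M_0}}_{(f_\alpha)_H} - \tfrac{i}{8} c_{M_0}(\Om).
\]
Summing gives exactly $\sD_{M_0} - \tfrac{i}{8} c_{M_0}(\Om)$ on the core.

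\medskip

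\noindent\textbf{Step 3 (closures and KK-regularity).} Both operators $D_V\ti_\Na D_B$ and $\sD_{M_0} - \tfrac{i}{8}c_{M_0}(\Om)$ are closures of symmetric first-order elliptic operators agreeing on the common core $\Ga^\infty_c(M_0,E_{M_0})$, hence their closures coincide. Selfadjointness of the right-hand side follows from selfadjointness of the left-hand side (which is the theta-deformed tensor sum and is regular by the unbounded Kasparov module structure) together with the fact that $c_{M_0}(\Om)$ is a bounded symmetric multiplication operator on $L^2(M_0,E_{M_0})$.

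\medskip

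\noindent\emph{Main obstacle.} The nontrivial content is Step 2, specifically the emergence of the $-\tfrac{i}{8}c_{M_0}(\Om)$ correction. This is a purely commutative, local geometric computation on the Riemannian submersion $q:M_0\to B$ (the noncommutative deformation enters only through the equivariant unitary $V$, which commutes with every differential operator in sight). The calculation is delicate because one must carefully separate the mixed horizontal--vertical terms in $\Na^{\sE_{M_0}}_{(f_\alpha)_H}$ and invoke the identification $\Om(X,Y,Z) = \inn{[X,Y],Z}_{M_0}$ to recognize the residue as Clifford multiplication by the curvature two-form. This is precisely the content of the factorization theorems in \cite{KS16, KS17b, KS17a}, to which the remaining bookkeeping may be delegated.
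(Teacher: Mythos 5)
Your route is the same as the paper's: transport the tensor sum through the unitary $W\Ga$ of \cite[Proposition 14]{KS17b} and invoke the undeformed factorization result \cite[Proposition 18]{KS17b} for the local computation producing the curvature term $-\tfrac{i}{8}c_{M_0}(\Omega)$; your Steps 2 and 3 are exactly what that citation delivers, so there is nothing to add there.

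The one genuine gap is in your Step 1, and it sits precisely where the paper's proof does its only real work. You assert that $W\Ga$ is an isomorphism of $C^*$-correspondences from $C_0((M_0)_\te)$ to $\C$, but the cited result only gives this for the commutative algebra $C_0(M_0)$. Since the deformed left actions on both sides are defined by $\pi(x) = V^* x V$ through the decomposition into spectral subspaces for the torus action, one must check that $W\Ga$ is $\T^{2m}$-equivariant, so that it preserves spectral subspaces and hence intertwines the deformed actions. This is not automatic, and your remark that $V$ ``commutes with every differential operator in sight'' does not address it: the issue is whether $W$ itself respects the torus actions, not whether $V$ does. The verification reduces to the identity $c_{M_0}(Y_H)\,\wit\al_t = \wit\al_t\, c_{M_0}(Y_H)$ for horizontal lifts $Y_H$ of vector fields on $B$, which follows from $d\al_t(Y_H) = Y_H \ci \al_t$ together with the compatibility of the spin$^c$ lift $\ov\al_t$ with the Clifford action. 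Supply this check and your argument coincides with the paper's.
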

\begin{proof}
This follows immediately from \cite[Proposition 18]{KS17b} provided that the unitary isomorphism
\[
W \Ga : X \hot_{C_0(B)} L^2(B,E_B) \to L^2(M_0,E_{M_0})
\]
(of $C^*$-correspondences from $C_0(M_0)$ to $\C$) appearing in that proposition is torus equivariant, so that $W \Ga$ also intertwines the left actions of the theta-deformation $C_0( (M_0)_\te)$. We recall in this respect that 
\[
\Ga = (\ga_X \ot 1) \frac{1 + \ga_X \ot \ga_B}{2} + \frac{1 - \ga_X \ot \ga_B}{2}
\]
and that $W$ is given by the formula
\[
W : (\bra{\xi} \ot s ) \ot r \mapsto (\ket{r \ci q}\bra{\xi} )(s),
\]
for all $\xi \in \Ga^\infty(M_0,q^* E_B)$, $s \in \Ga^\infty_c(M_0, E_{M_0})$ and $r \in \Ga^\infty_c(B,E_B)$, see \cite[Proposition 14]{KS17b}. The torus equivariance therefore reduces to showing that
\[
c_{M_0}( Y_H) {\wit \al}_t(s) = {\wit \al}_t c_{M_0}( Y_H) (s),
\]
whenever $t \in \T^{2m}$ and $Y_H : M_0 \to T_H M_0$ is the horizontal lift of a smooth vector field $Y$ on $B$. But this follows since the unitary operator ${\wit \al}_t : L^2(M_0,E_{M_0}) \to L^2(M_0,E_{M_0})$ comes from the lift $\ov \al_t : E_M \to \al_t^* E_M$ and since $d\al_t(Y_H) = Y_H \ci \al_t$ for horizontal lifts. 
\end{proof}

Again, using the embedding map $\imath: M_0 \to M$ we can put this result in the context of spectral triples and KK-theory. We first remark that, up to unitary equivalence, we have the identity
\begin{equation}\label{eq:restricII}
\io^*( C^\infty(M_\theta), L^2(\sE_M), D_M ) = ( C_0^\infty((M_0)_\theta), L^2(M_0,E_{M_0}), D_{M_0}).
\end{equation}
Indeed, the second condition in Assumption \ref{a:almreg} ensures that \newline $D_M : \dom(D_M) \to L^2(\sE_M)$ and $D_{M_0} : \dom(D_{M_0}) \to L^2(M_0,E_{M_0})$ both have the dense subspace $\Ga_c^\infty(M_0,E_{M_0}) \subset L^2(M,E_M)$ as a core (upon identifying $L^2(M_0,E_{M_0})$ and $L^2(M,E_M)$ via the unitary isomorphism given by extension by zero), see \cite[Proposition 30]{KS17b} for a full proof. Notice also that the pullback along the inclusion $C_0^\infty((M_0)_\theta) \subset C^\infty(M_\theta)$ only changes the left action in question.

We shall now see how our tensor sum decomposition of $D_M$ is related to the interior Kasparov product in bivariant K-theory. Let us denote the classes in even $KK$-theory associated to the unbounded Kasparov module $(C_0^\infty( (M_0)_\te), X,D_V)$, the half-closed chain \newline $(C^\infty_c(B), L^2(B,E_B), D_B)$ and the spectral triple $(C^\infty(M_\te) , L^2(\sE_M), D_M)$ by 
\[
\begin{split}
& [(D_V)_\te] \in KK_0\big(C_0( (M_0)_\te), C_0(B)\big) \\
& [D_B] \in KK_0( C_0(B),\C) \q \Tex{and} \q [D_{M_\te}] \in KK_0( C(M_\te),\C),
\end{split}
\]
respectively.
\medskip

We begin with a preliminary lemma relating our tensor sum $D_V \ti_{\Na} D_B$ to the Dirac operator $D_M$ at the level of $KK$-theory.

\begin{lma}\label{l:tensorpull}
The triple $\big( C_0^\infty( (M_0)_\te) , L^2(M_0,E_{M_0}), D_V \ti_\Na D_B\big)$ is an even half-closed chain from $C_0( (M_0)_\te)$ to $\C$ and we have the identity
\[
[ (D_V \ti_\Na D_B)_\te] = \io^*[ D_{M_\te}]
\]
for the associated class $[ (D_V \ti_\Na D_B)_\te]$ in the $KK$-group \newline $KK_0( C_0((M_0)_\te),\C)$.
\end{lma}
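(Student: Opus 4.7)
The key input is Theorem \ref{thm:tensor-sum-toric}, which identifies $D_V \times_\nabla D_B$, up to unitary equivalence of $C^*$-correspondences from $C_0((M_0)_\te)$ to $\C$, with $\overline{\sD_{M_0} - \tfrac{i}{8} c_{M_0}(\Omega)}$ on $L^2(M_0, E_{M_0})$. Under the unitary $W\Gamma$ the left action of $C_0((M_0)_\te)$ on the interior tensor product $X \hot_{C_0(B)} L^2(B, E_B)$ is intertwined with its canonical left action on $L^2(M_0, E_{M_0})$ (the latter being precisely the action used in the right-hand side of \eqref{eq:restricII} after pullback along $\io$). Hence the entire lemma reduces to a statement about the unbounded operator
\[
D := \overline{\sD_{M_0} - \tfrac{i}{8} c_{M_0}(\Omega)}
\]
acting on $L^2(M_0, E_{M_0})$ equipped with its canonical left $C_0((M_0)_\te)$-action.

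Next I would verify the half-closed chain conditions for $(C_0^\infty((M_0)_\te), L^2(M_0, E_{M_0}), D)$. Since $c_{M_0}(\Omega) \in \End(E_{M_0})$ is a smooth bundle endomorphism on the compact set where $\Omega$ is supported (and bounded globally as $\Omega$ extends to a smooth tensor on all of $M$), the operator $B := -\tfrac{i}{8} c_{M_0}(\Omega)$ is bounded on $L^2(M_0, E_{M_0})$. Moreover $B$ is selfadjoint (a posteriori, from the selfadjointness of both $D$ and $D_{M_0}$ asserted by Theorem \ref{thm:tensor-sum-toric} and \eqref{eq:restricII}, together with $B = D - D_{M_0}$). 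In particular $D = D_{M_0} + B$ is a bounded selfadjoint perturbation of $D_{M_0}$, so it has the same domain as $D_{M_0}$, it preserves the dense subspace $\Ga^\infty_c(M_0, E_{M_0})$, and for each $x \in C_0^\infty((M_0)_\te)$ one computes $[D, \pi(x)] = [D_{M_0}, \pi(x)] + [B, \pi(x)]$; the first commutator is bounded by the spectral-triple half of \eqref{eq:restricII}, and the second is bounded by boundedness of $B$. Local compactness of the resolvent $\pi(x)(i + D)^{-1}$ for $x \in C_0((M_0)_\te)$ follows from the second resolvent identity
\[
(i + D)^{-1} = (i + D_{M_0})^{-1} - (i + D)^{-1} B (i + D_{M_0})^{-1},
\]
and the corresponding property of $D_{M_0}$ from \eqref{eq:restricII}. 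This establishes that $(C_0^\infty((M_0)_\te), L^2(M_0, E_{M_0}), D)$ is an even half-closed chain.

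Finally, for the equality of classes $[(D_V \ti_\Na D_B)_\te] = \io^*[D_{M_\te}]$ in $KK_0(C_0((M_0)_\te), \C)$, I would use the straight-line operator homotopy $D_t := D_{M_0} + t B$ for $t \in [0,1]$. Each $D_t$ is selfadjoint with the same domain as $D_{M_0}$, the map $t \mapsto \pi(x)(i + D_t)^{-1}$ is norm-continuous into the compact operators for each $x \in C_0((M_0)_\te)$ (again by the second resolvent identity), and the commutators $[D_t, \pi(x)]$ vary norm-continuously in $t$ for $x \in C_0^\infty((M_0)_\te)$. This yields an operator homotopy of unbounded KK-cycles between the cycle at $t = 0$, which represents $\io^*[D_{M_\te}]$ via \eqref{eq:restricII}, and the cycle at $t = 1$, which is the one in question. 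Hence the two classes agree.

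The main subtlety I expect is ensuring that the unitary isomorphism $W\Ga$ from Theorem \ref{thm:tensor-sum-toric} is genuinely a unitary of $C^*$-correspondences from $C_0((M_0)_\te)$ to $\C$, i.e., that it intertwines the theta-deformed left actions (not just the classical ones). This was addressed at the end of the proof of Theorem \ref{thm:tensor-sum-toric} by exploiting torus equivariance of $W$; one must simply appeal to that to transport the theta-deformed left action through the isomorphism without recomputing it.
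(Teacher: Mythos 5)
Your reduction of the lemma, via Theorem \ref{thm:tensor-sum-toric} and the torus equivariance of $W\Ga$, to a statement about $D_{M_0}' = \overline{\sD_{M_0} - \tfrac{i}{8}c_{M_0}(\Om)}$ with its canonical theta-deformed left action is correct and is exactly how the paper begins. The genuine gap is your claim that $B = -\tfrac{i}{8}c_{M_0}(\Om)$ is a \emph{bounded} operator on $L^2(M_0,E_{M_0})$ because ``$\Om$ extends to a smooth tensor on all of $M$''. The curvature $\Om$ is defined only on the principal stratum $M_0$, in terms of the horizontal distribution $T_H M_0 = (T_V M_0)^\perp$, and Clifford multiplication $c_{M_0}(\Om)$ is computed against \emph{orthonormal} vertical and horizontal frames. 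Near the singular strata the orbits degenerate, so these orthonormal components typically blow up --- compare the mean curvature $\cot\varphi - \tan\varphi$ in the $\S^4$ example, which is unbounded as $\varphi \to 0,\pi/2$. The paper explicitly treats $\ov{c_{M_0}(\Om)}$ as ``a priori unbounded'' and only requires $\ov{c_{M_0}(\Om)}\,\pi(x)$ to be bounded for suitably chosen localizing elements $x = (f\ci q)\ot 1$. Once boundedness of $B$ fails, everything downstream in your argument collapses: $D_{M_0}'$ is no longer a bounded perturbation of $D_{M_0}$ with the same domain, the second resolvent identity and the straight-line homotopy $D_t = D_{M_0} + tB$ are unavailable, and the half-closed chain conditions cannot be inherited from those of $D_{M_0}$ by perturbation.

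The paper's route is designed precisely to circumvent this. The half-closed chain property of $(C_0^\infty((M_0)_\te), L^2(M_0,E_{M_0}), D_{M_0}')$ is obtained directly from the fact that $\sD_{M_0}'$ is an odd symmetric elliptic first-order differential operator (via \cite{BDT89,Hil10}), combined with the equivariance argument of Proposition \ref{p:vertthet} to pass to the theta-deformed algebra (including the extra check that $C_0^\infty((M_0)_\te)$ maps $\dom((D_{M_0}')^*)$ into $\dom(D_{M_0}')$). The equality of $KK$-classes is then proved by localization: one chooses $x = (f\ci q)\ot 1$ with $f$ decaying fast enough that $\ov{c_{M_0}(\Om)}\pi(x)$ is bounded, invokes \cite[Theorem 13 and Theorem 19]{KS17a} to replace both half-closed chains by the genuine spectral triples built from the essentially selfadjoint localized operators $\pi(x)\sD_{M_0}\pi(x)$ and $\pi(x)\sD_{M_0}'\pi(x)$ without changing the classes, and only \emph{then} applies the bounded-perturbation argument, since $\pi(x)c_{M_0}(\Om)\pi(x)$ \emph{is} bounded. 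If you wish to salvage your homotopy argument, it must be run at the level of these localized operators, not of $D_{M_0}$ and $D_{M_0}'$ themselves.
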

\begin{proof}
To ease the notation, we define
\[
\sD_{M_0}' := \sD_{M_0} - \frac{i}{8} c_{M_0}(\Om) : \Ga^\infty_c(M_0, E_{M_0}) \to L^2(M_0,E_{M_0})
\]
and let $D_{M_0}' : \dom( D_{M_0}') \to L^2(M_0,E_{M_0})$ denote the closure.

By Theorem \ref{thm:tensor-sum-toric}, we may (in the present lemma) replace the triple 
\[
\big( C_0^\infty( (M_0)_\te) , L^2(M_0,E_{M_0}), D_V \ti_\Na D_B\big)
\]
by the triple
\[
\big( C_0^\infty( (M_0)_\te), L^2( M_0, E_{M_0}), D_{M_0}' \big). 
\]

Now, since the unbounded operator 
\[
\sD_{M_0}' : \Ga_c^\infty( M_0, E_{M_0}) \to L^2( M_0, E_{M_0})
\]
is an odd symmetric and elliptic first order differential operator we know from \cite{BDT89,Hil10} that the non-deformed triple
\[
\big( C_c^\infty(M_0), L^2( M_0, E_{M_0}), D_{M_0}' \big)
\]
is an even half-closed chain. The fact that the theta-deformed triple
\[
\big( C_0^\infty( (M_0)_\te), L^2( M_0, E_{M_0}), D_{M_0}' \big)
\]
is an even half-closed chain now follows from the argument given in the proof of Proposition \ref{p:vertthet}. Indeed, since both $\sD_{M_0}$ and $\sD_V \ti_{\Na} \sD_B$ are equivariant for the respective torus actions and the unitary equivalence $W \Ga : X \hot L^2(B,E_B) \to L^2(M_0,E_{M_0})$ (see the proof of Theorem \ref{thm:tensor-sum-toric}) is also torus equivariant, we conclude that
\[
D_{M_0}': \dom( D_{M_0}') \to L^2(M,E_M)
\]
must be torus equivariant as well. Notice that the extra detail that $C_0^\infty( (M_0)_\te)$ must map the domain of the adjoint $(D_{M_0}')^*$ into the domain of $D_{M_0}'$ follows by factorizing an arbitrary element $x \in \C_c[ (M_0)_\te]$ as a product $x \cd ((f \ci q) \ot 1)$ where $f \in C_c^\infty(B)$ and then use a continuity argument to pass to general elements in $C_0^\infty((M_0)_\te)$.

To show that the even half-closed chains
\begin{equation}\label{eq:nonlocal}
\begin{split}
& \big( C_0^\infty( (M_0)_\te), L^2( M_0, E_{M_0}), D_{M_0}' \big) \q \Tex{and} \\
& \big( C_0^\infty( (M_0)_\te), L^2( M_0, E_{M_0}), D_{M_0} \big)
\end{split}
\end{equation}
represent the same class in $KK_0( C_0( (M_0)_\theta),\C)$ we follow the argument given in the proof of \cite[Lemma 19]{KS17b}. Indeed, we choose a positive function $f \in C_0(B)$ such that $x := (f \ci q) \otimes 1 \in C_0( (M_0)_\theta)$ satisfies that 
\begin{itemize}
\item $x \cd C_0( (M_0)_\theta) \subset C_0( (M_0)_\theta)$ is norm-dense;
\item $\pi(x)$ preserves the core $\Ga^\infty_c(M_0,E_{M_0})$ and the commutators
\[
\begin{split}
& [\sD_{M_0}, \pi(x)] : \Ga^\infty_c(M_0,E_{M_0}) \to L^2(M_0,E_{M_0}) \q \Tex{and} \\
& [\sD_{M_0}', \pi(x)] : \Ga^\infty_c(M_0,E_{M_0}) \to L^2(M_0,E_{M_0}) 
\end{split}
\]
have bounded extensions to $L^2(M_0,E_{M_0})$;
\item The (a priori) unbounded operator $\ov{c_{M_0}(\Om)} \pi(x)$ is defined everywhere on $L^2(M_0,E_{M_0})$ and is in fact bounded.
\end{itemize}
We then have the localized unbounded operators $\pi(x) D_{M_0} \pi(x)$ and $\pi(x) \sD_{M_0}' \pi(x) = \pi(x) \sD_{M_0} \pi(x) - \frac{i}{8} \pi(x) c_{M_0}(\Om) \pi(x)$ both with domain $\Ga_c^\infty(M_0,E_{M_0}) \subset L^2(M_0,E_{M_0})$. By \cite[Theorem 13 and Theorem 19]{KS17a} these localized unbounded operators are essentially selfadjoint and the even spectral triples
\begin{equation}\label{eq:localize}
\begin{split}
& \big( C^\infty_0( (M_0)_\te), L^2( M_0, E_{M_0}), \ov{ \pi(x) \sD_{M_0}' \pi(x)} \big) \q \Tex{and} \\
& \big( C^\infty_0( (M_0)_\te), L^2( M_0, E_{M_0}), \ov{ \pi(x) \sD_{M_0} \pi(x)} \big)
\end{split}
\end{equation}
represent the same classes in $KK_0( C_0( (M_0)_\te), \C)$ as our original half-closed chains (from \eqref{eq:nonlocal}). Remark that our localized unbounded operators really define spectral triples, so that unbounded modular cycles are not needed here, since the modular operator $\pi(x^2)$ commutes with all elements in the Fr\'echet $*$-algebra $C_0^\infty( (M_0)_\te)$. The result of the lemma now follows by noting that the two spectral triples in Equation \eqref{eq:localize} represent the same class in $KK_0( C_0( (M_0)_\te), \C)$ since $\ov{\pi(x) \sD_{M_0}' \pi(x)}$ is a bounded perturbation of $\ov{\pi(x) \sD_{M_0} \pi(x)}$.
\end{proof}

\begin{thm}
\label{thm:fact-toric}
We have the identity
\[
\io^*[ D_{M_\te}] = [ (D_V)_\te] \hot_{C_0(B)} [D_B]
\]
in the $KK$-group $KK_0( C_0((M_0)_\te),\C)$.
\end{thm}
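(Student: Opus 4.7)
The plan is to combine Lemma \ref{l:tensorpull} with the generalization of Kucerovsky's theorem to half-closed chains proved in \cite[Theorem 32]{KS17a}, arguing in direct parallel with the proof of Theorem \ref{thm:fact}. By Lemma \ref{l:tensorpull}, $\io^*[D_{M_\te}] = [(D_V \ti_\Na D_B)_\te]$, so it suffices to verify that the even half-closed chain $\bigl(C_0^\infty((M_0)_\te),\, X \hot_{C_0(B)} L^2(B,E_B),\, D_V \ti_\Na D_B \bigr)$ represents the Kasparov product of $[(D_V)_\te]$ and $[D_B]$. For this we check the two hypotheses of the Kucerovsky-type criterion: the connection condition \cite[Definition 27]{KS17a} and the local positivity condition \cite[Definition 29]{KS17a}.

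For the connection condition, a direct local computation shows that for each $\xi \in \sE_V^c$,
\[
(\sD_V \ti_\Na \sD_B)(\xi \ot \eta) - \ga_X(\xi) \ot \sD_B(\eta) = \sD_V(\xi) \ot \eta + \ga_X\bigl(\nabla^X_{\bullet}(\xi)\bigr) \ot (i\, c_B(\bullet)\eta)
\]
for all $\eta \in \Ga_c^\infty(B,E_B)$, where the second term is a bounded adjointable tensor in the sense required, being essentially $\ga_X \Na^X(\xi) \ot c_B(\cdot)\eta$. This is exactly the setup verified in \cite[Section 3]{KS17b} for the undeformed case, and the computation passes through the theta-deformation unchanged because $\pi(x) = V^* x V$ commutes with the isometry $V$ controlling the left action.

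For the local positivity condition, we take as localizing subset
\[
\La = \{ (f_n \ci q) \ot 1 \in \C_c[(M_0)_\te] \mid n \in \nn\} \subset C_0^\infty((M_0)_\te),
\]
where $\{f_n\} \subset C_c^\infty(B)$ is an approximate identity for $C_0(B)$. For such $x \in \La$ the element $\pi(x) \ot 1$ lies in the centre of the relevant vertical algebra, so the commutator $[\sD_V \ot 1, \pi(x) \ot 1]$ vanishes on the algebraic tensor product core. The G\aa rding-type inequality \eqref{eq:gaar} generalizes via \cite[Lemma 21]{KS17b} using the vertical ellipticity of $\sD_V$ and the ellipticity of the tensor sum. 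The main estimate to produce is the analogue of \eqref{eq:locaposi}: we need a constant $\ka_x > 0$ so that the relevant anti-commutator is bounded below by $-\ka_x \inn{\pi(x)\hot 1\, \xi,\pi(x)\hot 1\,\xi}$ after localization.

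The hard step will be establishing this positivity estimate in the general toric setting, since we no longer have the explicit eigenbasis used for $\S^4_\te$. However, by Theorem \ref{thm:tensor-sum-toric}, the tensor sum $D_V \ti_\Na D_B$ agrees up to unitary equivalence with the closure of $\sD_{M_0} - \tfrac{i}{8} c_{M_0}(\Om)$, so the anti-commutator $(\sD_V \ot 1)(\ga_X \ot_\Na \sD_B) + (\ga_X \ot_\Na \sD_B)(\sD_V \ot 1)$ equals a symmetric first order differential operator on $M_0$ (the cross terms of $(\sD_{M_0}')^2$). When pre- and post-multiplied by $\pi(x) \hot 1 = m(f_n \ci q) \hot 1$ whose symbol has compact support in $B$ (hence the support in $M_0$ is saturated under $\T^{2m}$ but locally compact in each orbit slice), this anti-commutator may be written as a Lichnerowicz-type sum of squares plus a zeroth-order term bounded on $\supp(f_n)$. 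Concretely, one splits $\sD_V$ into vertical components along each generator of $\T^{2m}$ and completes squares against the curvature tensor $\Om$ and the mean curvature $k$, exactly as in the $\S^4_\te$ calculation producing the operators $\sT', \sT''$; the resulting cross terms cancel with the anti-commutator modulo a zeroth-order multiplication operator whose norm on $\supp(f_n)$ is finite by compactness. Taking $\ka_x$ to be this sup-norm completes the verification. The conclusion $\io^*[D_{M_\te}] = [(D_V)_\te] \hot_{C_0(B)} [D_B]$ then follows from \cite[Theorem 32]{KS17a}.
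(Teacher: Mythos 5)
Your proposal is correct and follows essentially the same route as the paper: reduce via Lemma \ref{l:tensorpull} to the tensor sum, then verify the connection and local positivity conditions of the half-closed-chain version of Kucerovsky's theorem from \cite{KS17a}, using a localizing subset of the form $\{(g\ci q)\ot 1\}$ with $g$ compactly supported and smooth on $B$ (the paper takes a partition of unity where you take an approximate identity, but both work for the same reason: such elements act by multiplication with a torus-invariant function, so the theta-deformation is invisible and the undeformed argument of \cite[Theorem 22]{KS17b} applies verbatim). The paper simply defers both conditions to \cite{KS17b}, whereas you sketch the positivity estimate explicitly in parallel with the $\S^4_\te$ computation; this is consistent with how the estimate is actually obtained there.
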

\begin{proof}
We follow the proof of \cite[Theorem 22]{KS17b} closely. In particular, we rely on our version of Kucerovsky's theorem for half-closed chains established in \cite{KS17a}.

By Lemma \ref{l:tensorpull}, we may replace $\io^*[D_{M_\te}]$ with $[ (D_V \ti_{\Na} D_B)_\te]$ and, by \cite[Theorem 34]{KS17a}, it then suffices to verify that our three triples
\[
\begin{split}
& (C_0^\infty( (M_0)_\te), X,D_V) \q ( C^\infty_c(B), L^2(B,E_B), D_B) \q \Tex{and} \\
& \big( C_0^\infty( (M_0)_\te) , L^2(M_0,E_{M_0}), D_V \ti_\Na D_B\big)
\end{split}
\]
satisfy the connection condition and the local positivity condition given in \cite[Definition 27 and Definition 29]{KS17a}.

The connection condition is proved exactly as in \cite[Theorem 22]{KS17b}. The local positivity condition requires us to choose a localizing subset, see \cite[Definition 28]{KS17a}. As in the proof of \cite[Theorem 22]{KS17b} we choose a countable smooth partition of unity $\{ \chi_m\}$ for the base manifold $B$ such that each $\chi_m : B \to [0,1]$ has compact support. We then define the localizing subset 
\[
\La = \big\{ \chi_m \ci q \ot 1 \mid m \in \nn \big\} \subset \C_c[ (M_0)_\te].
\]
Since the left action of each $\chi_m \ci q \ot 1$ on $X$ is simply given by the multiplication operator with the compactly supported smooth function $\chi_m \ci q : M_0 \to [0,1]$, we obtain the local positivity condition using the argument given in the proof of \cite[Theorem 22]{KS17b}.
\end{proof}


\newcommand{\noopsort}[1]{}\def\cprime{$'$}
\providecommand{\bysame}{\leavevmode\hbox to3em{\hrulefill}\thinspace}
\providecommand{\MR}{\relax\ifhmode\unskip\space\fi MR }
\providecommand{\MRhref}[2]{%
  \href{http://www.ams.org/mathscinet-getitem?mr=#1}{#2}
}
\providecommand{\href}[2]{#2}

\end{document}